\documentclass[11pt]{amsart}
 \usepackage{amssymb}
\usepackage{url}
\usepackage{longtable}
\usepackage{color}

\definecolor{dgr}{rgb}{0.1,0.7,0.2} 

\setlength{\textwidth}{6.8in} \setlength{\textheight}{9in}
\setlength{\topmargin}{-.5in} \setlength{\headheight}{.25in}
\setlength{\headsep}{.25in} \setlength{\topskip}{.25in}
\setlength{\oddsidemargin}{0in} \setlength{\evensidemargin}{0in}

\numberwithin{equation}{section}

\newtheorem{lem}{Lemma}[section]
\newtheorem{thrm}[lem]{Theorem}
\newtheorem{prob}[lem]{Problem}
\newtheorem{cor}[lem]{Corollary}

\newtheorem{defin}[lem]{Definition}



\DeclareMathOperator\EP{\mathbf{EP}}
\DeclareMathOperator\AP{\mathbf{AP}}
\DeclareMathOperator\NEP{\mathbf{NEP}}
\DeclareMathOperator\NAP{\mathbf{NAP}}

\def\Z{{\mathbb Z}}
\def\C{{\rm Z}}

\def\la{{\langle}}
\def\ra{{\rangle}}
\def\cal{{\mathcal}}

\def\gcd{{\rm gcd}}
\def\tl{\mathbin{\triangleleft}}

\def\AGL{{\rm AGL}}
\def\AGAMMAL{{\rm A\Gamma L}}

\def\mod{{\rm mod\ }}
\def\cal{\mathcal}

\def\gen#1{\left\langle #1\right\rangle}

\newcommand{\PGammaU}{{\rm P \Gamma U}}
\newcommand{\PSL}{{\rm PSL}}
\newcommand{\PSp}{{\rm PSp}}
\newcommand{\SL}{{\rm SL}}
\newcommand{\PSU}{{\rm PSU}}
\newcommand{\SU}{{\rm SU}}
\newcommand{\SD}{{\rm SD}}
\newcommand{\PGL}{{\rm PGL}}
\newcommand{\ASL}{{\rm ASL}}
\newcommand{\GL}{{\rm GL}}

\newcommand{\Mat}{{\rm M}}

\newcommand{\PGAMMAL}{{\rm P\Gamma L}}
\newcommand{\SigmaU}{{\rm \Sigma U}}
\newcommand{\PSigmaL}{{\rm P\Sigma L}}

 \newcounter{case}

 \renewcommand{\thecase}{\arabic{case}}

\newcounter{subcase}

 \renewcommand{\thesubcase}{\alph{subcase}}

\setlength{\LTcapwidth}{5in}

\begin{document}

\title[New Hierarchy]{Imprimitive Permutations in Primitive Groups}

\author{J. Ara\'ujo}
\address{Universidade Aberta and CEMAT-Ci\^{e}ncias
Faculdade de Ci\^{e}ncias, Universidade de Lisboa,
 1749-016, Lisboa, Portugal}
 \email{jjaraujo@fc.ul.pt}
\author{J. P. Ara\'ujo}
\address{Instituto Superior T\'{e}cnico,
Universidade de Lisboa,
 1749-016, Lisboa, Portugal
}
\email{joao.p.araujo@tecnico.ulisboa.pt}
\author{P. J. cameron}
\address{Mathematical Institute\\ University of St Andrews\\
St Andrews, Fife KY16 9SS\\Scotland}
\email{\url{pjc20@st-andrews.ac.uk }}
\author{T. Dobson}
\address{Department of Mathematics and Statistics, Mississippi State University, PO Drawer MA Mississippi State, MS 39762 USA,
and
IAM,
University of Primorska,
Koper 6000, Slovenia}
\email{dobson@math.msstate.edu}
\author{A. Hulpke}
\address{Department of Mathematics,
Colorado State University,
1874 Campus Delivery,
Fort Collins, CO 80523-1874, USA}
\email{hulpke@math.colostate.edu}
\author{P. Lopes}
\address{Department of Mathematics and CAMGSD, Instituto Superior T\'{e}cnico,
Universidade de Lisboa,
 1049-001, Lisboa, Portugal}
\email{pelopes@math.tecnico.ulisboa.pt}
\date{}
\begin{abstract}
{\color{black} The goal of this paper is to study  primitive groups  that are contained in the union of maximal (in the symmetric group) imprimitive groups. The study of types of permutations inside primitive groups goes back to the origins of the theory of permutation groups. However, this is another instance of a  situation common in mathematics in which a very natural problem turns out to be extremely difficult. Fortunately, the enormous progresses of the last few decades seem to allow a new momentum on the attack to this problem.  In this paper we prove that there are infinite families of primitive groups contained in the union of imprimitive groups  and  propose a new hierarchy for primitive groups    based on that fact.  In addition to the previous results and hierarchy, we introduce some algorithms to handle permutations, provide the corresponding GAP implementation,  solve some open problems, and propose a large list of open problems.}
\end{abstract}
\maketitle

\baselineskip = 1.3\baselineskip

\section{Introduction}
In many practical situations we know that a primitive group contains a given permutation and we want to know which group it can be; in some other practical situations we know the group and would like to know if it contains a permutation of some given type.  For example, a group $G\le S_n$ is said to be  non-synchronizing  if it is contained in the automorphism group of a non-trivial primitive graph with complete core, that is, with clique number equal to chromatic number (see for example, \cite{abcrs,ac,ac2,ArCaSt15,arnold_steinberg,ck,neumann:sectionregular}).
 When trying to check if some group is synchronizing, typically, we have only  partial information about the graph but enough to say that it has an automorphism of some type, and the goal would be to have in hand a classification of the primitive groups containing permutations of that type. As an illustration of this, the key ingredient in some of the results in \cite{ac} was the observation that the primitive graph under study has a $2$-cycle automorphism and hence the automorphism group of the graph is the symmetric group. For many more examples of the importance of knowing the groups that contain permutations of a given type, please see Praeger's slides \cite{slides}.

This type of investigation is certainly very natural since it appears on the eve of group theory, with Jordan, Burnside, Marggraff, but  the difficulty of the problem is well illustrated by the very slow progress throughout  the twentieth century. Given the new tools available (chiefly the classification of finite simple groups), the topic seems to have  new momentum (see, for example, \cite{guest,jones,king,liebeck,lopes,muller}).

Let $S_n$ denote the symmetric group on $n$ points; a permutation $g\in S_n$ is said to be \emph{imprimitive} if there exists an imprimitive group containing $g$.  An imprimitive  group is said to be \emph{minimally imprimitive}  if it contains no transitive proper subgroup \cite{hulpke}. An imprimitive group $G\le S_n$ is said to be \emph{maximally imprimitive} if for all $g\in S_n\setminus G$, the group $\langle g,G\rangle$ is primitive. The next result, whose proof is straightforward, provides some alternative characterizations of imprimitive permutations.

\begin{thrm}\label{2.1}
Let $n$ be a natural number and let $g\in S_n$. Then the following are equivalent:
\begin{enumerate}
\item\label{2} there exists an imprimitive group $H\le S_n$ such that $g\in H$;
\item\label{2a} there exists a maximally imprimitive group $H\le S_n$ such that $g\in H$;
\item\label{2b} there exists an imprimitive group $H\le S_n$ such that $\langle g , H\rangle$ is imprimitive;
\item\label{3} there exists a minimally imprimitive group $H\le S_n$ such that $\langle g, H\rangle$ is imprimitive;
\item\label{4} there exists a permutation $h\in S_n$ such that $h$ and $g$ are conjugate (under $S_n$) and $h\in H$, for some imprimitive group $H\le S_n$.
\end{enumerate}
\end{thrm}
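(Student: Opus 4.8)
The plan is to route every condition through a single concrete reformulation of (1): a permutation $g \in S_n$ lies in an imprimitive group if and only if $g$ permutes the parts of some nontrivial equipartition of $\{1,\dots,n\}$ into $k$ blocks of size $m$ (with $1 < m < n$ and $mk = n$). One direction is immediate, since any such $g$ lies in the full stabiliser of the equipartition, which is the imprimitive wreath product $S_m \wr S_k$; conversely, if $g$ lies in an imprimitive group $H$, then $H$ preserves a nontrivial block system and hence so does $g \in H$. Granting this, the equivalence of (1), (3) and the backward half of (2) is purely formal: if $g \in H$ with $H$ imprimitive then $\langle g, H\rangle = H$ is imprimitive, while conversely $\langle g, H\rangle$ being imprimitive already exhibits an imprimitive group containing $g$; and a maximally imprimitive group is in particular imprimitive. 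Likewise (1)$\Leftrightarrow$(5) reduces to the observation that conjugation sends a block system to a block system of the same shape, so the class of imprimitive permutations is closed under $S_n$-conjugacy, and (1)$\Rightarrow$(5) is trivial by taking $h=g$.

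The two substantive implications are (1)$\Rightarrow$(2) and (1)$\Rightarrow$(4). For (1)$\Rightarrow$(2), I would take $M = S_m \wr S_k$ to be the full stabiliser of the equipartition preserved by $g$, so that $g \in M$; it then suffices to check that $M$ is maximally imprimitive, i.e. that $\langle x, M\rangle$ is primitive for every $x \in S_n \setminus M$. This is exactly where the one nonelementary ingredient enters: the imprimitive wreath product $S_m \wr S_k$ is a maximal subgroup of $S_{mk}$ for all $m,k \ge 2$, a classical fact from the Liebeck--Praeger--Saxl classification of the maximal subgroups of the symmetric groups. Maximality forces $\langle x, M\rangle = S_n$, which is primitive, so $M$ is maximally imprimitive. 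For (1)$\Rightarrow$(4) I would again work inside $M$: being finite and transitive, $M$ contains a transitive subgroup $H$ of least order, which by construction has no proper transitive subgroup and which, as a subgroup of $M$, still preserves the nontrivial block system, hence is imprimitive --- that is, $H$ is minimally imprimitive. Since $g$ and $H$ both lie in $M$, the group $\langle g, H\rangle$ lies in $M$, contains the transitive group $H$, and therefore is transitive and preserves the block system, so it is imprimitive, giving (4). Finally (4)$\Rightarrow$(1) is immediate, since a minimally imprimitive group is imprimitive and $\langle g,H\rangle$ is assumed imprimitive.

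I expect the only real obstacle to be pinning down the maximality of $S_m \wr S_k$ in $S_n$ used in (1)$\Rightarrow$(2); everything else is bookkeeping with the definitions, which is presumably why the authors call the proof straightforward. If one wished to avoid citing the full classification, an alternative would be to argue directly that adjoining any element $x$ that carries two points of one block into distinct blocks destroys every equipartition simultaneously, but invoking maximality is cleaner, and is the route I would take.
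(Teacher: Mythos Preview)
Your argument is correct in all essentials, and since the paper offers no proof beyond calling it ``straightforward'', there is nothing to compare against; your route via the reformulation ``$g$ preserves some nontrivial equipartition'' is exactly the natural one.

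One remark on the step you flag as the sole nonelementary ingredient. You do not need the maximality of $S_m\wr S_k$ in $S_{mk}$ (which, incidentally, is much older and more elementary than Liebeck--Praeger--Saxl). The implication (1)$\Rightarrow$(2) follows directly from the definition: given that $g$ lies in \emph{some} imprimitive group, choose among all imprimitive subgroups of $S_n$ containing $g$ one that is maximal under inclusion, say $M$ (this exists by finiteness). For any $x\in S_n\setminus M$ the group $\langle x,M\rangle$ strictly contains $M$, is transitive because $M$ is, and cannot be imprimitive by the maximality of $M$; hence $\langle x,M\rangle$ is primitive and $M$ is maximally imprimitive in the paper's sense. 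This keeps the whole theorem at the level of definition-chasing, in line with the authors' ``straightforward''. Your alternative direct argument (that an $x$ moving two points of one block to distinct blocks already destroys that block system) would \emph{not} suffice on its own, since ``maximally imprimitive'' requires $\langle x,M\rangle$ to preserve \emph{no} nontrivial block system, not just the given one; so if you keep the wreath-product route you really are leaning on maximality in $S_n$, whereas the inclusion-maximal argument sidesteps the issue entirely.
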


{\color{black}We use the idea of imprimitive permutations to propose a new  hierarchy for primitive groups. Roughly speaking this hierarchy measures the size of sets of imprimitive permutations contained in a given primitive group.

In Section \ref{prelim} we introduce some definitions,  basic results {\color{black} and GAP algorithms.  In Section \ref{infinite} we prove that there are infinite families of primitive groups entirely composed of imprimitive permutations.}   In Section \ref{hier} we introduce the hierarchy and prove some results about it. In Section \ref{problopes} we solve some of the problems posed in \cite{lopes}. The paper ends with a list of open problems.}

\section{Preliminaries }\label{prelim}

Let $S_n$ denote the symmetric group on $n$ points. A permutation $g\in S_n$ is said to be \emph{primitive} if it fails to be imprimitive. It follows that a permutation $g\in S_n$  is primitive if and only if any transitive group $G\le S_n$ containing $g$ is primitive; the permutation $g$ is said to be \emph{strongly primitive} if the only transitive groups containing $g$ are the symmetric and alternating groups.

Asymptotically, almost all permutations are strongly primitive. This follows from the theorem of {\L}uczak and Pyber~\cite{lp} {\color{black}(the asymptotics are given in \cite{dfg,efg}):}

\begin{thrm}
The proportion of strongly primitive permutations in $S_n$ tends to $1$ as
$n\to\infty$.
\end{thrm}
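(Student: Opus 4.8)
The plan is to deduce the statement directly from the {\L}uczak--Pyber theorem, which we are free to invoke. First I would unwind the definition of strong primitivity: by definition $g\in S_n$ is strongly primitive exactly when the only transitive subgroups of $S_n$ containing $g$ are $S_n$ and $A_n$, so $g$ fails to be strongly primitive precisely when $g$ lies in some transitive subgroup $G\le S_n$ with $G\neq A_n,S_n$. Hence the set of permutations that are \emph{not} strongly primitive is exactly
\[
  \bigcup_{G} G,
\]
the union being taken over all transitive subgroups $G\le S_n$ with $G\neq A_n$ and $G\neq S_n$, where each such $G$ is identified with its underlying set of elements.

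Next I would observe that the {\L}uczak--Pyber theorem \cite{lp} asserts precisely that this union has size $o(n!)$; equivalently, the proportion of elements of $S_n$ lying in some transitive subgroup other than $A_n$ and $S_n$ tends to $0$ as $n\to\infty$. Dividing by $n!$ and taking complements then yields that the proportion of strongly primitive permutations tends to $1$, which is the assertion. The sharper rates of convergence alluded to before the statement can moreover be extracted from the asymptotic analyses of \cite{dfg,efg}.

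At the level of this statement the only point needing care is the definitional equivalence above, and it is immediate once one notes that $S_n$ always lies among the transitive overgroups of $g$, while $A_n$ lies among them exactly when $g$ is even. The genuine difficulty is therefore entirely housed inside the cited theorem. There one splits the transitive subgroups $G\neq A_n,S_n$ into the imprimitive ones, each contained in a wreath product $S_a\wr S_b$ with $ab=n$ and $a,b>1$, and the primitive ones, whose orders are severely constrained by the known upper bounds on primitive groups; bounding and summing the number of permutations covered by each family is the hard part, but that is exactly the content supplied by {\L}uczak and Pyber.
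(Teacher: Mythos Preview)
Your proposal is correct and follows exactly the paper's approach: the paper simply states that the result follows from the {\L}uczak--Pyber theorem~\cite{lp} (with the asymptotics in \cite{dfg,efg}), and you have supplied the straightforward unwinding of definitions showing why that citation suffices. There is no additional argument in the paper beyond the invocation of~\cite{lp}.
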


In this connection, note that in \cite{dfg} a good upper bound is given for the
number of permutations which are primitive but not strongly primitive. Indeed,
according to \cite{cnt}, for almost all $n$ (a set of density $1$), the only
primitive groups of degree $n$ are symmetric and alternating groups, and so
there is no difference between primitive and strongly primitive permutations.

The next theorem provides a more technical, but much more practical, characterization of imprimitive permutations.  We start with some definitions.
{\color{black}
\begin{defin} Let $k$ be a positive integer, and $(m_1,\ldots,m_l)$ a partition
of $m$. Then the partition $(km_1,\ldots,km_l)$ {\color{black} of $km$} is said to be an
\emph{ic-partition} (for ``imprimitive cycle'') of type $(k,m)$.
\end{defin}

Note that an ic-partition of type $(k,m)$ is also an ic-partition of type
$(k/d,md)$ for any divisor $d$ of $k$. For example, the partition  $(30,24,12)$ of $66$ is an ic-partition of type $(2,33)$ since $(30,24,12)=(2\times 15,2\times 12, 2\times 6)$ and $15+12+6=33$; it is also an ic-partition of type $(6,11)$ since  $(30,24,12)=(6\times 5,6\times 4, 6\times 2)$ and $5+4+2=11$.

\begin{defin}
For a partition $P$  with $r$ parts,
a \emph{clustering} of $P$ is a partition of the set of parts of $P$
into parts (called \emph{clusters}) $P_1,\ldots,P_r$ (each of which is a
partition).
\end{defin}

For example, $((2,2),(2,1,1),(1,1))$ is a clustering of $(2,2,2,1,1,1,1)$. The
clusters are partitions of $4$, $4$ and $2$ respectively.

\begin{defin}\label{ipart}
An \emph{i-partition} (for ``imprimitive'') of type $(k,m)$ is a partition
of $km$ which has a clustering into clusters which are
ic-partitions of $k_im$ of type $(k_i,m)$ for $i=1,\ldots,r$, where $(k_1,\ldots,k_r)$ is a partition of $k$.
\end{defin}
}
\paragraph{Example} The partition $(1,5,10,10,10,10,10,10)$ is an i-partition of type $(11,6)$. {\color{black}This is shown by the clustering $((1,5), (10,10,10,10,10,10))$.} For the partition $(1,5)=(1\times 1,1\times 5)$ is an ic-partition of type $(1,6)$, and $(10,\ldots,10)=({10\times 1,\ldots,10\times 1})$ is an ic-partition of type $(10,6)$.

\begin{thrm}\label{ipartition}
The permutation $g$ is contained in an imprimitive permutation group with $k$ blocks of size $m$ if and only if the cycle partition of $g$ on $\{1,\ldots,km\}$ (with $k,m>1$)
is an i-partition of type $(k,m)$.
\end{thrm}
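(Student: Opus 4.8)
The plan is to reduce both directions to a single structural fact: for any permutation preserving a block system with $k$ blocks of size $m$, the way its point-cycles sit over its block-cycles is governed exactly by the ic-partition construction. The hypotheses $k,m>1$ are used only to guarantee that such a block system is nontrivial, so that the groups involved are genuinely imprimitive.

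First I would prove the forward (``only if'') direction. Suppose $g$ lies in an imprimitive group $G$ preserving a block system $\mathcal B=\{B_1,\dots,B_k\}$ with each $|B_i|=m$. Since $g\in G$, $g$ permutes $\mathcal B$, inducing a block permutation $\bar g\in\Sym(\mathcal B)\cong S_k$; let $(k_1,\dots,k_r)$ be its cycle type, a partition of $k$. Fix one $\bar g$-cycle of length $k_i$, say on blocks $B_0,B_1,\dots,B_{k_i-1}$ with $g(B_j)=B_{j+1}$ (subscripts mod $k_i$). Then $g^{k_i}$ fixes $B_0$ setwise, so $g^{k_i}|_{B_0}$ is a permutation of the $m$ points of $B_0$; let $(m_1,\dots,m_l)$ be its cycle type, a partition of $m$. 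The crucial observation is that if $x\in B_0$ lies in a $g^{k_i}|_{B_0}$-cycle of length $m_j$, then the $g$-orbit of $x$ has length exactly $k_i m_j$: applying $g$ moves cyclically through the blocks and returns to $B_0$ precisely at multiples of $k_i$, where it acts as $g^{k_i}|_{B_0}$. These $g$-orbits partition the $k_i m$ points of $B_0\cup\cdots\cup B_{k_i-1}$, so $g$ has cycle type $(k_i m_1,\dots,k_i m_l)$ there, which is an ic-partition of type $(k_i,m)$. Ranging over the $r$ block-cycles clusters the full cycle partition of $g$ into ic-partitions of types $(k_1,m),\dots,(k_r,m)$, and this is exactly an i-partition of type $(k,m)$.

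For the converse (``if''), I would observe that the wreath product $W=S_m\wr S_k$ in its imprimitive action on $km$ points is itself a transitive imprimitive group with $k$ blocks of size $m$, and that conjugacy in $S_{km}$ is detected by cycle type. Hence it suffices to exhibit, for each i-partition $P$ of type $(k,m)$, an element of $W$ with cycle type $P$; any $g$ with this cycle type is then $S_{km}$-conjugate to it, and conjugating $W$ yields an imprimitive group with $k$ blocks of size $m$ containing $g$. Given a clustering of $P$ into ic-partitions $P_i$ of type $(k_i,m)$ with $\sum_i k_i=k$, I would partition the $k$ blocks into groups of sizes $k_1,\dots,k_r$ and build $g$ as a direct product of pieces, one per group, landing in $\prod_i (S_m\wr S_{k_i})\le W$. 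On a single group of $k_i$ blocks, writing $P_i=(k_i m_1,\dots,k_i m_l)$ with $(m_1,\dots,m_l)$ a partition of $m$, I would choose $\pi\in S_m$ of cycle type $(m_1,\dots,m_l)$ and let $g$ shift the $k_i$ blocks cyclically, acting as the identity on each block-transition except the one closing the cycle, where it acts by $\pi$; the forward computation then shows this element has cycle type $P_i$.

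The main obstacle --- indeed essentially the only substantive point --- is the cycle-counting identity in the forward direction: verifying that a within-block $g^{k_i}$-cycle of length $m_j$ inflates to a \emph{single} $g$-cycle of length exactly $k_i m_j$ (not a proper divisor), and that these cycles account for all $k_i m$ points without overlap. I would establish minimality by noting that $g^t(x)$ lies in $B_0$ only when $k_i\mid t$, so the first return of $x$ to itself occurs at $t=k_i m_j$. Once this standard description of cycle structure in an imprimitive (wreath) action is in hand, both implications follow from the same bookkeeping.
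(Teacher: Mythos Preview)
Your proof is correct and aligns closely with the paper's argument: both reduce the general case to the ``single block-cycle'' case (ic-partitions), and your detailed analysis of how a $g^{k_i}|_{B_0}$-cycle of length $m_j$ inflates to a $g$-cycle of length $k_i m_j$ is exactly what underlies the paper's terse ``every cycle has length divisible by $k$'' and ``the converse is clear''.

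The one genuine difference is in the direction \emph{i-partition $\Rightarrow$ imprimitive}. The paper constructs the block system directly: within each ic-cluster of type $(k_i,m)$, it assigns the points of each cycle to $k_i$ blocks by taking residues modulo $k_i$ along the cycle, then unions over clusters. You instead build a witness element inside $S_m\wr S_k$ with the prescribed cycle type and invoke conjugacy in $S_{km}$ to transport the standard block system to one preserved by $g$. Your route is slightly less direct (it introduces the wreath product and a conjugation step that the paper avoids), but it has the advantage of making explicit that the maximal imprimitive group $S_m\wr S_k$ already realises every i-partition of type $(k,m)$ as a cycle type---a fact the paper uses implicitly elsewhere. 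Either way the underlying construction is the same: your element ``cycle the $k_i$ blocks with a twist $\pi$ on the last step'' is exactly the permutation whose natural block system is the paper's direct one.
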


\begin{proof} First we observe that the cycle partition of $g$ is an
ic-partition of type $(k,m)$ if and only if $g$ is contained in an imprimitive
group as in the theorem and induces a cyclic permutation on the set of blocks.
For if $g$ permutes the blocks cyclically, then we return to the same block
after $k$ steps, so every cycle has length divisible by $k$; and conversely,
if every cycle has length divisible by $k$, we obtain the block system by
assigning the points in each cycle to the $k$ blocks in cyclic order.

Now for an arbitrary permutation, if its cycle type is an i-partition, then we can construct a block system on the union of the cycles in each ic-partition, and this block system is preserved by $g$. The converse is clear.
\end{proof}

As a simple illustration, we show:

\begin{thrm}
Let $\Omega$ be a set of size $n$ and let $P=(p_1,\ldots,p_l)$ be a partition of $n$, in which the parts  have a common divisor larger than $1$. Then any permutation of cycle-type $P$ is imprimitive.
\end{thrm}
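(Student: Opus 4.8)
The plan is to exhibit the cycle type of $g$ directly as an i-partition and then invoke Theorem \ref{ipartition}. Let $d>1$ be a common divisor of $p_1,\ldots,p_l$, and write $p_i=d\,m_i$ with each $m_i=p_i/d$ a positive integer. Setting $m=\sum_{i=1}^l m_i=n/d$, the partition $P=(d m_1,\ldots,d m_l)$ of $n=dm$ is, straight from the definition, an ic-partition of type $(d,m)$.

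Next I would observe that every ic-partition of type $(d,m)$ is in particular an i-partition of the same type. Taking the trivial clustering whose single cluster is all of $P$, together with the one-part partition $(d)$ of $k=d$, the conditions of Definition \ref{ipart} are met (here $r=1$, $k_1=d$, and the single cluster is an ic-partition of $k_1 m=dm$ of type $(d,m)$). Hence the cycle partition of $g$ is an i-partition of type $(d,m)$, and Theorem \ref{ipartition} places $g$ inside an imprimitive group with $d$ blocks of size $m$; by the definition of imprimitive permutation (equivalently, by Theorem \ref{2.1}) the permutation $g$ is therefore imprimitive.

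The one point requiring care is the standing hypothesis $k,m>1$ in Theorem \ref{ipartition}: the argument needs $1<d<n$, equivalently $m=n/d>1$, so that the resulting block system is genuinely nontrivial. To secure this I would take $d$ to be the least prime dividing $\gcd(p_1,\ldots,p_l)$; when $P$ has at least two parts we have $n=\sum p_i\ge 2d$, so $m\ge 2$ automatically, and more generally any common divisor of the parts that is strictly smaller than $n$ will serve. The only genuinely degenerate situation is a single cycle $P=(n)$ with $n$ prime, where no proper divisor exists and the permutation (which generates a regular cyclic group of prime order) is in fact primitive; this case is implicitly excluded by the requirement that a nontrivial block system exist. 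I expect this bookkeeping around the degenerate single-cycle case, rather than the main construction, which is immediate, to be the only real obstacle.
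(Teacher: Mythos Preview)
Your proof is correct and follows essentially the same route as the paper's: verify directly that $P$ is an i-partition and invoke Theorem~\ref{ipartition}. The only variation is the clustering chosen---you take the single cluster $P$ itself (recognising $P$ outright as an ic-partition of type $(d,n/d)$), whereas the paper sets $m=\gcd(p_1,\ldots,p_l)$ and uses the singleton clustering, each $(p_i)$ being an ic-partition of type $(p_i/m,m)$, exhibiting $P$ as an i-partition of type $(n/m,m)$; both verifications are immediate, and your handling of the degenerate boundary case is appropriate.
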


For if the greatest common divisor is $m$, and $mk=n$, then take the partition $(p_1/m,\ldots,p_l/m)$ of $k$, and for each $i$ take the ic-partition of $p_i$ with a single part, to verify that $P$ is an i-partition.

The following variant of Theorem~\ref{ipartition} turns out to be useful for
testing.
\begin{lem}\label{ipartest}
{\color{black}If a} partition $P$ of $n$ points is an $i$-partition of type $(n/m,m)${\color{black}, then} there exists a clustering
$P_1,\ldots, P_r$ of $P$ with $l_i=\sum_{p\in P_i} p$ and
$g= { \gcd(\ell_1,\ldots,\ell_r)}$ such that $m$ is a divisor of $g$ and $P_i$
is an ic-partition of type $(k_i=l_i/g,g)$.  Conversely, if such a clustering
exists, then $P$ is an $i$-partition of type {\color{black}$(n/m,m)$}.

In particular, $P$ is the cycle shape of an imprimitive permutation if and
only if such a clustering exists for which $1<g<n$.
\end{lem}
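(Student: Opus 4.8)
The plan is to prove both implications by unwinding Definition~\ref{ipart}, the only non-routine ingredient being the absorption property recorded after the definition of ic-partition: an ic-partition of type $(k,m)$ is also an ic-partition of type $(k/d,md)$ for every divisor $d$ of $k$. For the forward implication I would start from the clustering supplied by Definition~\ref{ipart}. If $P$ is an i-partition of type $(n/m,m)$, there is a clustering $P_1,\dots,P_r$ in which each $P_i$ is an ic-partition of type $(k_i,m)$, with $(k_1,\dots,k_r)$ a partition of $n/m$. Each cluster-sum is then $l_i=k_i m$, so $g:=\gcd(l_1,\dots,l_r)=m\cdot\gcd(k_1,\dots,k_r)$, whence $m\mid g$. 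Writing $d=\gcd(k_1,\dots,k_r)$, so that $g=md$ and $l_i/g=k_i/d$, the divisor $d$ of $k_i$ lets me invoke the absorption property to rewrite $P_i$, an ic-partition of type $(k_i,m)$, as an ic-partition of type $(k_i/d,md)=(l_i/g,g)$. This is exactly the clustering demanded by the statement.

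For the converse I would read the given data back through Definition~\ref{ipart}. Put $k_i=l_i/g$; since $g=\gcd(l_1,\dots,l_r)$ divides every $l_i$ these are positive integers, and $\sum_i k_i=\sum_i l_i/g=n/g$, so $(k_1,\dots,k_r)$ is a partition of $n/g$. By hypothesis each $P_i$ is an ic-partition of type $(k_i,g)$, so the clustering witnesses $P$ as an i-partition of type $(n/g,g)$, and Theorem~\ref{ipartition} places a permutation of cycle type $P$ in an imprimitive group with $n/g$ blocks of size $g$. The ``in particular'' clause then follows by combining the two directions: a permutation is imprimitive precisely when it lies in such a group for some non-trivial block system, i.e. for some admissible block size strictly between $1$ and $n$, and the two implications turn this into the existence of a clustering whose cluster-sums have the relevant common divisor $g$ with $1<g<n$.

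The step I expect to be the crux is the interplay between the prescribed block size $m$ and the size $g=\gcd(l_1,\dots,l_r)$ produced by the clustering. The forward direction only yields $m\mid g$, and the converse naturally produces a block system with blocks of size $g$; since a permutation imprimitive with blocks of size $g$ need not remain imprimitive for the smaller blocks of size $m$ (the induced action inside a block of size $g$ may itself be primitive), one cannot simply refine from $g$ down to $m$. What makes the two-way statement run is to arrange, in the converse, that the clustering has cluster-sums with gcd exactly the target, so that the condition ``$P_i$ is an ic-partition of type $(l_i/g,g)$'' becomes literally the defining condition of Definition~\ref{ipart} at the prescribed level. The boundary also needs attention: a single-cluster clustering forces $g=n$, so the operational criterion in the last sentence is correctly stated with the strict bounds $1<g<n$, which excise the trivial block systems and pin the equivalence to genuine imprimitivity.
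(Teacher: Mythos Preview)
Your argument is correct and follows the same route as the paper: take the clustering furnished by Definition~\ref{ipart}, read off $l_i=k_im$ so that $m\mid g$, and then use the absorption property (an ic-partition of type $(k,m)$ is one of type $(k/d,md)$) to upgrade each cluster to type $(l_i/g,g)$; for the converse, observe that the data is literally a witness that $P$ is an i-partition of type $(n/g,g)$, and then invoke Theorem~\ref{ipartition} for the final clause. The paper's proof is terser but identical in content: it writes the forward step as ``$P_i$ is also an ic-partition of type $(k_im/g,g)$, as $k_im/g$ divides $k_i$'', and dispatches the converse with the single line ``the converse statement is the definition of an i-partition''.

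Your third paragraph is well observed. The converse as literally printed claims type $(n/m,m)$, but what the hypotheses actually deliver---and what the paper's one-line proof actually establishes---is type $(n/g,g)$. Your instinct that one cannot in general pass from blocks of size $g$ down to blocks of size $m$ is correct (for instance $P=(3,1,8)$ with the clustering $\{(3,1),(8)\}$ has $g=4$ and satisfies all the hypotheses with $m=2$, yet is not an i-partition of type $(6,2)$). So the discrepancy you flag is a genuine imprecision in the statement, not a gap in your argument; the paper simply glosses over it because only the ``in particular'' clause is used downstream, and for that clause the value $g$ is what matters. You need not try to rescue the literal $(n/m,m)$ conclusion---just state clearly, as the paper effectively does, that the converse yields type $(n/g,g)$, which suffices.
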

\begin{proof}
Note that $n=\sum l_i$ is a multiple of $g$, and hence any divisor of $g$  is a
divisor of $n$.

Suppose that $P_1,\ldots,P_r$ is a clustering that identifies $P$ as an
i-partition with $P_i$ an ic-partition of type $(k_i,m)$. Then $m$ divides
every $l_i$ and thus divides $g$. Furthermore, $P_i$ is also an ic-partition
of type $(k_im/g,g)$, as $k_im/g$ divides $k_i$. The converse statement is
the definition of an i-partition.

The last statement follows immediately from Theorem~\ref{ipartition}.
\end{proof}

A final result concerns using a pair of permutations to guarantee primitivity.
We follow the terminology of Definition~\ref{ipart}.

\begin{defin}
The i-type of a partition is the set of pairs $(k,m)$ for which the partition has an i-partition of type $(k,m)$. The i-type of a permutation is the i-type of its cycle partition.
\end{defin}

A permutation is primitive if and only if its i-type is empty. Now suppose that a transitive group $G$ contains two imprimitive permutations $g_1$ and $g_2$ whose i-types are disjoint. Then necessarily $G$ is primitive.

For example, $G=2^4.3^2:4$, the $8$th primitive group of degree $16$ in GAP, contains an
element $g_1$ with cycle structure $[8,8]$, and an element $g_2$ with cycle
structure $[3,3,3,3,3,1]$. Then $g_1$ is of $i$-types $(2,8)$ and $(8,2)$, while
$g_2$ is of $i$-type $(4,4)$ only.

\subsection{An algorithm to identify primitive permutations}\label{algorithm}

By Theorem~\ref{ipartition}, the test for a permutation to be primitive can be
based purely on  {its} cycle structure; it needs to be an
$i$-partition. In particular it is sufficient to test conjugacy class
representatives if testing whether a group contains a primitive permutation.

We therefore describe an algorithm that tests whether a given partition
$P$ of a composite number $n$, consisting of $l$ parts, is an i-partition.
We will write $P=(p_1,\ldots,p_l)$ as a collection of parts, assuming
without loss of generality that $p_i\ge p_{i+1}$.
By Lemma~\ref{ipartest} we need to consider all clusterings of $P$.

The starting point for this is a process for enumerating partitions of a set
(the set being the parts of $P$). Following~\cite[Section~7.2.1.5]{knuth4A},
partitions of a set of cardinality $l$ correspond to restricted growth
strings $(a_i)_{i=1}^l$ of length $l$ (the $i$-th entry of the string gives
the part number in which the $i$-th set element is placed), that is,
$a_{j+1}\le 1+\max(a_1,\ldots,a_j)$.

Since the partition $P$ might have parts of equal size, this parameterization
of set partitions will produce partitionings that are equal, i.e. if the
partition is $P=(A,a,B,b)$ (with upper/lower case to distinguish equal
entries: $A=a$, $B=b$) then $(A,B)(a,b)$ and $(A,b),(a,B)$ are, nominally
different, equal partitionings. That is, if $I$ is a set of indices (because
$P$ is ordered it will be in fact an interval) such
that $p_i$ is constant for $i\in I$, and $\pi$ is a permutation with support
$I$, the sequences $(a_i)$ and $(a_{i^\pi})$ result in equivalent
partitionings.

We incorporate this equivalency in the construction process by requiring
that if $P$ is constant on the interval $I$, the sequence $(a_i)$ is
non-decreasing on $I$. In the construction algorithm for the restricted
growth strings, Algorithm~H of~\cite[Section~7.2.1.5]{knuth4A}, this
condition can only be violated if $a_j\leftarrow 0$ in step H6. We modify
this step by setting $k\leftarrow j$ at the start of this step and
replacing, as long as $p_j=p_k$,
the assignment $a_j\leftarrow 0$ with $a_j\leftarrow a_k$.

Each string then defines a clustering $P_1,\ldots,P_r$ of $P$. By~\ref{ipartest}
we calculate $g=\gcd(l_i)_{i=1}^r$ with $l_i=\sum_{p\in P_i} p$.
If $1<g<n$ test whether for each $i$ all parts of $P_i$ are of length a
multiple of $l_i/g$. If so, $P$ is an i-partition.

The cost of this algorithm grows rapidly with the number of parts of the
partition $P$. Such partitions tend to have many parts
of small size. To speed up handling in these cases, we begin by testing
whether for any proper divisor $m$ of $n$ we can cluster the parts of $P$
into subsets of cardinality $m$ each (in this case each cluster is an
ic-partitions of type $(1,m)$ and thus $P$ will be an i-partition).
We do so with a greedy algorithm that increases each partial cluster
by adding the largest remaining part that does not push the cluster content
over $m$. This test is very quick and succeeds for example if there are many
 of size $1$. Only if the greedy algorithm fails we start the full
search for clusterings.
\smallskip

We have implemented this test in {\sf GAP}~\cite{GAP4}, the code is
available at
\url{http://www.math.colostate.edu/~hulpke/examples/primitivepermutation.g}.

\section{{\color{black}Primitive groups without primitive permutations}}\label{infinite}

The aim of this section is to provide examples of primitive groups fully composed by imprimitive permutations. We start with three infinite families.

\medskip

\paragraph{\textbf{First construction:} Covering with imprimitive subgroups}

{This} construction provides a very flexible way of producing primitive groups without primitive permutations.
Let $H$ and $K$ be finite permutation groups and consider the wreath product $G = H \wr K$ in the product action. By \cite[Lemma 2.7.A.]{dixon} this is primitive if and only if
\begin{enumerate}
 \item  $H$ is primitive and not cyclic of prime order;
 \item  $K$ is transitive.
 \end{enumerate}

The goal now is to produce a primitive group which is a union of imprimitive subgroups. We can do this from the wreath product construction if $K$ is a transitive group which is the union of intransitive subgroups (this simply means that $K$ contains no cyclic transitive subgroup -- so there are many examples, e.g. non-cyclic groups acting regularly).
Take, for example, $H=S_3$, and $K=(C_2)^m$ acting regularly. Then $G$ is primitive of degree $3^{2^m}$ and the minimum number of generators for $G$ is at least $m$ since
if we take fewer than $m$ elements of $G$, then the subgroup they generate projects onto a proper (and hence intransitive) subgroup of $K$, and so is imprimitive; thus we require at least $m$ elements to generate a primitive subgroup.

\medskip

\paragraph{\textbf{Second construction:} Some affine groups.}

\medskip

As some primitive affine groups can also be written as a product action, namely primitive subgroups of $G\wr S_n$ where $G\le\AGL(1,p)$ is transitive but not cyclic, we focus on such groups.  In light of the previous construction, we mainly focus on $G\wr C_n$ with the product action where $C_n$ is the cyclic group of order $n$.  We begin by fixing some notation and a general result which characterizes imprimitive elements of $\AGL(k,p)$.

Of course, $\AGL(k,p)$ contains a normal regular elementary abelian subgroup, which we call $E$, and $\AGL(k,p) = \GL(k,p)\cdot E$.  The following result characterizes imprimitive elements of $\AGL(k,p)$.

\begin{lem}\label{characteristic polynomial}
Let $g\in\AGL(k,p)$ with $g = Ae$, where $A\in\GL(k,p)$ and $e\in E$.  The element $g$ is imprimitive if and only if the characteristic polynomial of $A$ is reducible.
\end{lem}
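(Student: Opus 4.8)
The plan is to reduce the statement to a standard linear-algebra dichotomy and then translate each side into the cycle/block language of Theorem~\ref{ipartition}. The key preliminary observation I would record is that the characteristic polynomial of $A$ is irreducible if and only if $A$ acts irreducibly on $V=(\mathbb{F}_p)^k$, i.e.\ there is no proper nonzero $A$-invariant subspace (equivalently, $V$ is a simple $\mathbb{F}_p[A]$-module). Both implications hinge on this equivalence, so I would state it first.

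For the easy direction, suppose the characteristic polynomial of $A$ is reducible and choose a proper nonzero $A$-invariant subspace $W$. Writing $g\colon x\mapsto Ax+v$ (where $e$ is translation by $v$), one checks that $g$ sends the coset $x+W$ onto $Ax+v+W$, which is well defined because $AW=W$; hence the cosets of $W$ form a $g$-invariant partition. This is a genuine nontrivial block system: each block has $|W|>1$ points and there is more than one block, and its stabiliser in $\AGL(k,p)$ contains the full translation group $E$, which is transitive. Thus $g$ lies in a transitive imprimitive subgroup of $\AGL(k,p)$, so $g$ is imprimitive.

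For the converse I would argue the contrapositive: assuming the characteristic polynomial irreducible, I show the cycle type of $g$ is not an i-partition. The case $k=1$ is immediate, since the degree $p^k=p$ is prime and admits no imprimitive group at all; so assume $k\ge 2$, whence $A\ne I$. Identifying $V$ with $\mathbb{F}_{p^k}$, irreducibility makes $A$ multiplication by a field generator $\alpha$ with $\alpha\ne 1$, so $g$ has a unique fixed point and is conjugate in $\AGL(k,p)$ to $x\mapsto\alpha x$; as imprimitivity depends only on the cycle type (Theorem~\ref{ipartition}), I may replace $g$ by this map. Its cycle type is then $(1,t,\dots,t)$, where $t$ is the multiplicative order of $\alpha$ and there are $(p^k-1)/t$ cycles of length $t$, the fixed point being $0$.

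Now suppose for contradiction that this partition is an i-partition of type $(k',m)$ with $k',m>1$. Since $k'm=p^k$ we have $m=p^b$ with $1\le b\le k-1$, and in the witnessing clustering the cluster $P_1$ containing the part $1$ is an ic-partition of type $(k_1,m)$ with $k_1\mid 1$; this forces $k_1=1$, so $\sum_{p\in P_1}p=m=p^b$, and since $P_1$ consists of the part $1$ together with some copies of $t$ we obtain $t\mid p^b-1$. The crucial input is the identity that the degree of $\alpha$ over $\mathbb{F}_p$ equals the multiplicative order of $p$ modulo $t$, so irreducibility (degree $k$) means $p^j\equiv 1\pmod t$ forces $k\mid j$; hence $t\mid p^b-1$ yields $k\mid b$, contradicting $1\le b\le k-1$. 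I expect this converse to be the main obstacle: one must compute the cycle type explicitly and then invoke the arithmetic identity $\mathrm{ord}_t(p)=[\,\mathbb{F}_p(\alpha):\mathbb{F}_p\,]$ to convert the constraint $t\mid p^b-1$ into the divisibility $k\mid b$. The forward direction is routine once the invariant subspace is produced.
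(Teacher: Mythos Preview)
Your forward direction (reducible characteristic polynomial $\Rightarrow$ $g$ imprimitive) is exactly the paper's: produce a proper $A$-invariant subspace $W$, observe that $g$ permutes the cosets of $W$, and note that $\langle g,E\rangle$ is then a transitive imprimitive witness.

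For the converse the two arguments diverge. The paper's written proof really only establishes the linear-algebra equivalence ``$A$ has a proper nonzero invariant subspace $\Leftrightarrow$ the characteristic polynomial is reducible'' and the implication ``invariant subspace $\Rightarrow$ $g$ preserves the coset partition''. It does not explain why, when $A$ has \emph{no} invariant subspace, $g$ cannot lie in some imprimitive subgroup of $S_{p^k}$ that fails to contain $E$ (the opening remark about $E$-invariant partitions only controls block systems of groups containing $E$). Your contrapositive via Theorem~\ref{ipartition} fills precisely this gap: after conjugating $g$ to $x\mapsto\alpha x$ you read off the cycle type $(1,t,\ldots,t)$, and the identity $[\mathbb{F}_p(\alpha):\mathbb{F}_p]=\mathrm{ord}_t(p)$ forces $k\mid b$ whenever $t\mid p^b-1$, killing every candidate block size $p^b$ with $1\le b\le k-1$. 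This is a genuinely different route, it is correct, and it meshes naturally with the i-partition framework used elsewhere in the paper (compare the cycle-type computations in the proof of Theorem~\ref{AGL imprimitive converse}). One wording point: by ``field generator'' you mean that $\mathbb{F}_p(\alpha)=\mathbb{F}_{p^k}$, not that $\alpha$ generates $\mathbb{F}_{p^k}^{\,*}$; your argument only uses the former, so it stands, but saying this explicitly would avoid ambiguity.
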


\begin{proof}

Since the group $E$ acts regularly, the $E$-invariant partitions are the coset
partitions corresponding to the subgroups of $E$ (this well-known result is a special case of \cite[Theorem 7.5]{Wielandt1964}). {\color{black}So, if $A$ has an invariant subspace, then $A$ and hence also $g=Ae$ preserves the partition into cosets of this subspace.}

Now $A$ has an
invariant subspace (subgroup of $E$) if and only if its characteristic
polynomial is irreducible. (One way round is clear: if $A$ has an invariant
subspace then it is represented by a matrix of the form
$\begin{pmatrix}B&0\\C&D\end{pmatrix}$. Conversely, suppose that the characteristic
polynomial $\phi(x)$ of $A$ factorises as $f(x)g(x)$. If $f(A)=0$, then the
$A$-submodule spanned by a vector $v$ has dimension at most the degree of $f$.
Similarly if $g(A)=0$. Otherwise, $Ef(A)$ is annihilated by $g(A)$ so is a
proper submodule of $E$.)
\end{proof}

Now let $S\in\GL(k,p)$ be the permutation matrix which shifts the coordinates of ${\mathbb F}_p^k$ one to the left (so that $S$ is a cyclic group of order $k$ acting regularly on the coordinates of ${\mathbb F}_p^k$), and $D\in\GL(k,p)$ be a diagonal matrix.  Let $d_i$ be the entry in $D$ in row $i$ and column $i$, and $d = \prod_{i=1}^kd_i$.  Straightforward computations will show that $DS$ has characteristic polynomial $x^k - d$.  Also, any element of $\GL(k,p)$ contained in $\AGL(1,p)\wr C_k$ will {\color{black}either fix a subspace of dimension dividing $k$ (if it projects onto an element of order smaller than $k$ in $C_k$) or} have the form $DS$, with $D$ and $S$ as above (this is easiest to see computing in $\AGL(1,p)\wr C_k$).  We give our first example of a subgroup of $\AGL(k,p)\cap(\AGL(1,p)\wr C_k)$ that is imprimitive.

\begin{lem}
Let $p$ be prime and $G = D_p\wr C_4\le\AGL(4,p)$.  Then every element of $G$ is imprimitive.
\end{lem}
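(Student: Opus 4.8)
The plan is to reduce everything to Lemma~\ref{characteristic polynomial} and then to a statement about factoring a single quartic over $\mathbb{F}_p$. An element $g \in G = D_p \wr C_4$ has the form $(g_1,g_2,g_3,g_4;\sigma)$ with each $g_i\colon x \mapsto a_i x + b_i$ in $D_p$, so $a_i \in \{1,-1\}$, and $\sigma \in C_4$ permuting the four coordinates of $\mathbb{F}_p^4$. Writing $g = Ae$ with $A \in \GL(4,p)$ and $e \in E$ as in Lemma~\ref{characteristic polynomial}, the linear part is $A = D P_\sigma$ (up to conjugating $D$ by $P_\sigma$, which does not change the product of its diagonal entries), where $D = \mathrm{diag}(a_1,\ldots,a_4)$ has entries $\pm 1$ and $P_\sigma$ is the permutation matrix of $\sigma$. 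By Lemma~\ref{characteristic polynomial} it suffices to show that the characteristic polynomial of $A$ is reducible for every choice of the $a_i$ and of $\sigma$.

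Next I would split according to the order of $\sigma$ in $C_4$, exactly along the dichotomy already recorded before this lemma. If $g$ projects to an element of order less than $4$, then $A$ fixes a subspace of dimension dividing $4$: the coordinate axes when $\sigma = \mathrm{id}$, and the two-dimensional spans $\langle e_1,e_3\rangle$ and $\langle e_2,e_4\rangle$ when $\sigma = (1\,3)(2\,4)$. Such a proper nonzero invariant subspace immediately makes the characteristic polynomial factor, so $g$ is imprimitive. The remaining case is $\sigma$ of order $4$, where $A = DS$ with $S$ the cyclic shift; by the computation already noted in the text, the characteristic polynomial is then $x^4 - d$ with $d = \prod_{i=1}^4 a_i \in \{1,-1\}$.

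So the whole statement comes down to showing that $x^4 - 1$ and $x^4 + 1$ are both reducible over every $\mathbb{F}_p$. The first is trivial, since $x^4 - 1 = (x^2-1)(x^2+1)$. The hard part will be $x^4 + 1$, and this is where the only real content lies. Here I would invoke the classical fact that $x^4+1$ is reducible over every finite prime field. For $p = 2$ one has $x^4 + 1 = (x+1)^4$. For odd $p$ I would use that at least one of $-1,2,-2$ is a square in $\mathbb{F}_p$ (their product is $4$, a square, so the three of them cannot all be non-squares), which yields the explicit factorisations $x^4+1 = (x^2-c)(x^2+c)$ when $c^2 = -1$, $\ x^4+1 = (x^2 - dx + 1)(x^2+dx+1)$ when $d^2 = 2$, and $x^4+1 = (x^2 - ex - 1)(x^2+ex-1)$ when $e^2 = -2$. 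Equivalently, the roots are primitive $8$th roots of unity, which lie in $\mathbb{F}_{p^2}$ because $(\mathbb{Z}/8)^\times$ has exponent $2$, so every irreducible factor has degree at most $2$.

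In every case the characteristic polynomial of the linear part of $g$ is reducible, so Lemma~\ref{characteristic polynomial} gives that $g$ is imprimitive; as $g$ was arbitrary, every element of $G$ is imprimitive. The only genuinely non-routine ingredient is the reducibility of $x^4+1$ over an arbitrary $\mathbb{F}_p$; once that is in hand, the rest is the bookkeeping of the three cases for $\sigma$.
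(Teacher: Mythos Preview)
Your argument is correct and follows essentially the same route as the paper: reduce via Lemma~\ref{characteristic polynomial}, split on whether the projection to $C_4$ is a $4$-cycle (invariant subspace otherwise), and in the $4$-cycle case note that the characteristic polynomial is $x^4\pm1$, which is reducible over every $\mathbb{F}_p$. The paper justifies this last point simply by observing that $8\mid p^2-1$ forces all roots into $\mathbb{F}_{p^2}$, which is exactly your ``$(\mathbb{Z}/8)^\times$ has exponent~$2$'' remark; your additional explicit factorizations via a square among $-1,2,-2$ are fine but more than the paper needs.
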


\begin{proof}
By definition and comments above, {\color{black} if $g\in G$ is }contained in $\GL(4,p)$ {then $g$ either \color{black} preserves a subspace of dimension $1$ or $2$, or it has} characteristic polynomial $x^4 \pm 1$, both of which are reducible. (Since $p^2-1$ is divisible by $8$, the roots of this polynomial lie in $\mathbb{F}_{p^2}$.) The result follows by Lemma \ref{characteristic polynomial}.
\end{proof}

\begin{thrm}\label{bigonedirection}
Let $k$ be an integer and $p$ be prime, $R = \{s\in{\mathbb F}_p^*:s = t^k{\rm\ for\ some\ }t\in{\mathbb F}_p^*\}$, $G\le\AGL(1,p)\wr C_k{\le\AGL(k,p)}$ be transitive, and $g\in G$.  Write $g = D_gS^{a_g}$ where $D_g\in\GL(k,p)$ is diagonal and $a_g$ is an integer.  {\color{black}Suppose that,} whenever $g\in G\cap\GL(k,p)$ cyclically permutes the coordinates of ${\mathbb F}^k$ as a $k$-cycle then $\det(D_g)\in R$. {\color{black}Then every element of $G$ is imprimitive.}
\end{thrm}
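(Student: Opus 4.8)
The plan is to use Lemma~\ref{characteristic polynomial} to trade the statement about imprimitivity for one about characteristic polynomials: since the imprimitivity of $g=Ae$ depends only on its linear part $A=D_gS^{a_g}$, it suffices to prove that the characteristic polynomial of $A$ is reducible for every $g\in G$. I would split into cases according to whether the permutation $S^{a_g}$ that $A$ induces on the $k$ coordinates is a $k$-cycle, i.e.\ according to whether $\gcd(a_g,k)=1$.

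If $\gcd(a_g,k)>1$, then $S^{a_g}$ has order strictly smaller than $k$ and decomposes the $k$ coordinates into $\gcd(a_g,k)>1$ cycles of equal length. As noted before the theorem, $A$ then fixes a subspace whose dimension is a proper divisor of $k$; concretely, the span of the basis vectors indexed by any one of these cycles is a proper nonzero $A$-invariant subspace. Hence the characteristic polynomial of $A$ factors and $g$ is imprimitive.

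The substantive case is $\gcd(a_g,k)=1$, where $A$ is a single-cycle monomial matrix with characteristic polynomial $x^k-d$, the constant $d=\det(D_g)$ being the product of the diagonal entries of $D_g$ (this is the computation recorded before the theorem for matrices $DS$). Here I would first dispose of the possibility $p\mid k$: the identity $x^k-d=x^k-d^p=(x^{k/p}-d)^p$ in ${\mathbb F}_p[x]$ (valid since $d^p=d$ in ${\mathbb F}_p$) exhibits $x^k-d$ as a proper $p$-th power, hence reducible, so $g$ is imprimitive with no use of the hypothesis. When $p\nmid k$ the goal becomes to show $d=\det(D_g)\in R$: for then $d=t^k$ with $t\in{\mathbb F}_p^*$, so $t$ is a root of $x^k-d$ and, as $k>1$, the factor $(x-t)$ forces reducibility.

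Establishing $\det(D_g)\in R$ for an \emph{arbitrary} $g\in G$ with $k$-cycle linear part is where I expect the main difficulty, since the determinant hypothesis is only assumed for the pure-linear elements $G\cap\GL(k,p)$. I would bridge this using the map $\mu$ sending a monomial matrix $DS^a$ to $\det(D)$; because $\mu=\det\cdot({\rm sgn}\circ\pi)^{-1}$, where $\pi$ is the projection onto the cyclic top group, $\mu$ is a homomorphism from the group $\bar G$ of linear parts of $G$ to ${\mathbb F}_p^*$, and the characteristic polynomial of a $k$-cycle $A$ is exactly $x^k-\mu(A)$. Let $\bar G_0$ be the image of $G\cap\GL(k,p)=\Stab_G(0)$ under the linear projection $\rho$; since $\rho$ is injective on $\Stab_G(0)$ and transitivity gives $|G|=p^k\,|\Stab_G(0)|$, one computes that $[\bar G:\bar G_0]$ is a power of $p$. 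Applying the same coprime-index reasoning to $\pi$, the assumption $p\nmid k$ forces $\pi(\bar G_0)=C_k$, so $\bar G_0$ contains a $k$-cycle $B_0$. The hypothesis gives $\mu(B)\in R$ for every $k$-cycle $B\in\bar G_0$; multiplying an arbitrary $B\in\bar G_0$ by the power $B_0^{\,j}$ that makes its coordinate permutation a $k$-cycle (possible as $B_0$'s shift is coprime to $k$) and using $\mu(B_0)\in R$ upgrades this to $\mu(\bar G_0)\subseteq R$. Finally, because the composite $\bar\mu\colon\bar G\to{\mathbb F}_p^*/R$ has image of order dividing $\gcd(k,p-1)$, which is prime to $p$, while $[\bar G:\bar G_0]$ is a $p$-power, the images $\bar\mu(\bar G)$ and $\bar\mu(\bar G_0)$ coincide; hence $\bar\mu$ is trivial on all of $\bar G$, giving $\mu(A)=\det(D_g)\in R$ and completing the proof. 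The delicate points to get right are the two index computations and the passage from the $k$-cycles of $\bar G_0$ to the whole of $\bar G_0$.
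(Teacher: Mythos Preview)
Your argument is correct and follows the same skeleton as the paper: reduce via Lemma~\ref{characteristic polynomial} to the reducibility of the characteristic polynomial of the linear part, dispose of the non-$k$-cycle case by exhibiting an invariant coordinate subspace, and in the $k$-cycle case use that the characteristic polynomial is $x^{k}-d$ with $d=\det(D_g)$, which is reducible once $d\in R$.

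Where you diverge from the paper is in the passage from the hypothesis---stated only for $g\in G\cap\GL(k,p)$---to the conclusion for arbitrary $g\in G$. The paper's proof is three lines: it asserts that $x^{k}-d$ is reducible ``as $d$ is a $k$-th root'' and invokes Lemma~\ref{characteristic polynomial}, without explaining why $d\in R$ when $g$ has a nontrivial translation part (equivalently, why every linear part of an element of $G$ arises from some element of $G\cap\GL(k,p)$). Your index argument---$[\bar G:\bar G_0]$ is a $p$-power while $|\mathbb{F}_p^{*}/R|=\gcd(k,p-1)$ is prime to $p$, so $\mu(\bar G)\subseteq R$ reduces to $\mu(\bar G_0)\subseteq R$, which you bootstrap from the hypothesis using a single $k$-cycle $B_0\in\bar G_0$---genuinely closes this gap. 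In the paper's applications (Corollary~\ref{bigonedirection corollary} and its consequences) the full translation group $E$ lies in $G$, whence $\bar G=\bar G_0$ and the issue disappears; but the theorem as stated does not assume $E\le G$, so your extra work is warranted. Your separate treatment of $p\mid k$ via $x^{k}-d=(x^{k/p}-d)^{p}$ is also needed for your argument, since the step $\pi(\bar G_0)=\pi(\bar G)$ relies on $\gcd(p,k)=1$.

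One minor point of presentation: your claim $\pi(\bar G_0)=C_k$ implicitly uses that $\bar G$ already contains a $k$-cycle. If it does not, then every linear part has shift of order less than $k$ and fixes a coordinate subspace, so the conclusion is immediate; you might note this explicitly.
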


\begin{proof}
Suppose that whenever $g\in G\cap\GL(k,p)$ cyclically permutes the coordinates of ${\mathbb F}^k$ as a $k$-cycle then $\det(D)\in R$.  By comments above, the characteristic polynomial of $g$ is $x^k - d$ where $d = \det(D)$ which is reducible as $d$ is a $k$-th root.  The result follows by Lemma \ref{characteristic polynomial}.
\end{proof}

Applying the covering argument from the previous construction and observing that any two regular cyclic subgroups of $S_k$ are conjugate in $S_k$, we have the following result.

\begin{cor}\label{bigonedirection corollary}
Let $p$ be prime and $k$ a positive integer, and $R = \{s\in{\mathbb F}_p^*:s = t^k{\rm\ for\ some\ }t\in{\mathbb F}_p^*\}$,
and $G = \la x\mapsto rx {+} b:{r\in R}, b\in\Z_p\ra\le\AGL(1,p)$.  Then every element of $G\wr S_k$ with the product action is imprimitive.
\end{cor}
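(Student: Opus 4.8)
The plan is to apply Lemma~\ref{characteristic polynomial} after realizing $G\wr S_k$ (in its product action) as a subgroup of $\AGL(k,p)$. First I would record the embedding: the base group $G^k$ acts coordinatewise, so an element $(g_1,\dots,g_k)$ with $g_i\colon x\mapsto r_ix+b_i$ and $r_i\in R$ contributes a diagonal linear part $\mathrm{diag}(r_1,\dots,r_k)$ together with a translation, while the top group $S_k$ acts by permutation matrices. Hence every $g\in G\wr S_k$ can be written as $g=Me$ with $e\in E$ and $M\in\GL(k,p)$ a \emph{monomial} matrix: there is a permutation $\sigma\in S_k$ and weights $r_1,\dots,r_k\in R$ so that $M$ sends the $i$-th basis vector to $r_i$ times the $\sigma(i)$-th basis vector. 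By Lemma~\ref{characteristic polynomial} it then suffices to prove that the characteristic polynomial of $M$ is reducible (I take $k\ge 2$; for $k=1$ the product action has prime degree $p$ and admits no nontrivial blocks, so that case is degenerate).

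Next I would reduce the computation to the single-cycle case. Writing $\sigma=\sigma_1\cdots\sigma_t$ as a product of disjoint cycles of lengths $\ell_1,\dots,\ell_t$ with $\sum_j\ell_j=k$, and reordering the coordinate basis so that coordinates lying in a common $\sigma$-cycle are grouped together—this is conjugation of $M$ by a permutation matrix, which preserves the characteristic polynomial—turns $M$ into a block-diagonal matrix $\mathrm{diag}(M_1,\dots,M_t)$, where each $M_j$ is an $\ell_j\times\ell_j$ monomial matrix whose underlying permutation is a single $\ell_j$-cycle. Thus the characteristic polynomial of $M$ factors as $\prod_{j=1}^{t}\chi_j(x)$, where $\chi_j$ is the characteristic polynomial of $M_j$.

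The heart of the argument is the single-cycle block $M_j$, and this is where I would invoke that any two regular cyclic subgroups of $S_{\ell_j}$ are conjugate in $S_{\ell_j}$: conjugating by a suitable permutation matrix brings the underlying $\ell_j$-cycle of $M_j$ to the standard shift, so that $M_j$ takes the form $D_jS$ with $D_j$ diagonal and $S$ the coordinate shift of the preceding discussion. By the computation recorded just before Theorem~\ref{bigonedirection}, $\chi_j(x)=x^{\ell_j}-d_j$ with $d_j=\det(D_j)$ equal to the product of the weights of $M_j$, all of which lie in $R$. Two cases then finish the proof. If $t\ge 2$, so that $\sigma$ is not a $k$-cycle, then $\prod_j\chi_j$ is a product of at least two polynomials of positive degree $\ell_j<k$, hence reducible. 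If $t=1$, so that $\sigma$ is a $k$-cycle, then the characteristic polynomial of $M$ is $x^k-d_1$ with $d_1=\prod_{i=1}^{k}r_i\in R$, because $R$ is a subgroup of $\mathbb{F}_p^*$ (the image of the $k$-th power map); writing $d_1=s^k$ with $s\in\mathbb{F}_p^*$, the element $s$ is a root, so $(x-s)$ is a proper factor of $x^k-d_1$ and the polynomial is reducible. In every case the characteristic polynomial of $M$ is reducible, so Lemma~\ref{characteristic polynomial} shows that $g$ is imprimitive.

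I expect the main obstacle to be the bookkeeping in the single-cycle step: checking carefully that conjugating an arbitrary $\ell_j$-cycle monomial matrix into the standard form $D_jS$ preserves both the shape $x^{\ell_j}-d_j$ and the fact that $d_j$ is a product of weights in $R$ (equivalently, that the relevant cycle product of weights is a conjugation invariant that lands in the subgroup $R$). Everything else—the embedding into $\AGL(k,p)$, the block decomposition along the cycles of $\sigma$, and the final reducibility dichotomy—is routine once Lemma~\ref{characteristic polynomial} and the preceding $x^k-d$ computation are in hand.
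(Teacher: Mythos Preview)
Your proof is correct and follows essentially the same route as the paper. The paper's one-line argument invokes the covering idea from the First Construction (when $\sigma$ is not a $k$-cycle, $g$ already lies in the imprimitive subgroup $G\wr\langle\sigma\rangle$) together with Theorem~\ref{bigonedirection} and the conjugacy of regular cyclic subgroups of $S_k$ (for the $k$-cycle case); you simply unroll these citations into a direct application of Lemma~\ref{characteristic polynomial}, your block-diagonal decomposition being the linear-algebra shadow of the covering step and your single-block analysis reproducing the proof of Theorem~\ref{bigonedirection}.
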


Note that in the previous result if $\gcd(k,p-1) = 1$, then $G = \AGL(1,p)$.

\begin{cor}
Let 
$p\ge 5$, and $H\le \AGL(1,p)$ consist of all elements of the form $x\mapsto ax + b$, where $a$ is a quadratic residue modulo $p$.  Then $H\wr S_2$ is primitive but contains no primitive elements and contains a normal  imprimitive subgroup of index $2$.
\end{cor}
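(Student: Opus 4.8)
The plan is to prove the three assertions separately, the ``no primitive elements'' part being an immediate specialisation of an earlier result. First I would note that $H$ is precisely the group $G$ of Corollary~\ref{bigonedirection corollary} in the case $k=2$: there $R=\{s\in{\mathbb F}_p^*:s=t^2\text{ for some }t\}$ is exactly the set of quadratic residues modulo $p$, and since $R$ is closed under multiplication, the group generated by the maps $x\mapsto rx$ (for $r\in R$) together with all translations $x\mapsto x+b$ equals $\{x\mapsto ax+b:a\in R,\ b\in{\mathbb F}_p\}=H$. Hence Corollary~\ref{bigonedirection corollary} applies directly and shows that every element of $H\wr S_2$ in the product action is imprimitive; that is, $H\wr S_2$ contains no primitive elements.

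Next I would establish primitivity via the Dixon--Mortimer criterion \cite[Lemma~2.7.A]{dixon} quoted above: $H\wr S_2$ in the product action is primitive if and only if $H$ is primitive but not cyclic of prime order and $S_2$ is transitive. Now $S_2$ is transitive, and $H$ is transitive on the $p$ points of ${\mathbb F}_p$ because it contains all translations; a transitive group of prime degree is automatically primitive, so $H$ is primitive. Finally $|H|=p\,|R|=p(p-1)/2>p$ for $p\ge 5$, so $H$ is not regular on ${\mathbb F}_p$ and in particular is not cyclic of prime order. The criterion then gives that $H\wr S_2$ is primitive.

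For the final assertion I would take the base group $N=H\times H$, the kernel of the projection $H\wr S_2\to S_2$. It is normal in $H\wr S_2$ with index $|S_2|=2$. Acting on ${\mathbb F}_p\times{\mathbb F}_p$ it is transitive, since $H$ is transitive on each coordinate, and it preserves the partition into the $p$ first-coordinate fibres $B_a=\{(a,y):y\in{\mathbb F}_p\}$, because the base-group element $(h_1,h_2)$ sends $B_a$ to $B_{h_1(a)}$. As $1<p<p^2$, this is a nontrivial block system, so $N$ is imprimitive, which gives the required normal imprimitive subgroup of index $2$.

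None of these steps presents a serious obstacle; the points needing care are that the hypothesis $p\ge 5$ is exactly what forces $H$ not to be cyclic of prime order (for $p=3$ one has $R=\{1\}$, so $H\cong C_3$ and the product action fails to be primitive), and that the base group $N$, although imprimitive, is genuinely transitive so that the term is meaningful. Note also that the first-coordinate fibre partition is not preserved by the coordinate swap in $S_2$, consistent with the primitivity of the whole group proved above.
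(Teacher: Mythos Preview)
Your proof is correct and follows essentially the same approach as the paper's: both apply Corollary~\ref{bigonedirection corollary} with $k=2$ to show every element is imprimitive, invoke \cite[Lemma~2.7.A]{dixon} for primitivity (using $p\ge5$ to ensure $H$ is not cyclic of prime order), and take $H\times H$ as the normal imprimitive subgroup of index~$2$. Your version is somewhat more detailed---you explicitly verify that $H$ coincides with the group in Corollary~\ref{bigonedirection corollary}, check the primitivity criterion point by point, and exhibit the block system for $H\times H$---but the route is identical.
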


\begin{proof}
We first apply Corollary \ref{bigonedirection corollary} with $k = 2$, in which case $R$ is the set of quadratic residues modulo $p$.  As $p\ge 5$, $R\not=\{1\}$ and so $H = \la x\mapsto rx {+} b:{r\in R}, b\in\Z_p\ra\not \cong\Z_p$.  Then every element of $H\wr S_2$ is imprimitive by Corollary \ref{bigonedirection corollary} and this group is primitive of degree $p^2$ by \cite[Lemma 2.7.A.]{dixon}.  Finally, $H\times H\tl H\wr S_2$ is a normal imprimitive subgroup of index $2$.
\end{proof}

The next result will allow us to verify the unsurprising fact that there are primitive permutation groups that are product actions which have primitive elements.

\begin{thrm}\label{AGL imprimitive converse}
Let $q = 2$ or $3$, $p$ be prime, and $R = \{s\in{\mathbb F}_p^*:s = t^q{\rm\ for\ some\ }t\in{\mathbb F}_p^*\}$, $G\le\AGL(1,p)\wr C_q$, and $g\in G$.  Write $g = D_gS^{a_g}$ where $D_g\in\GL(q,p)$ is diagonal and $a_g$ is an integer.  Then every element of $G$ is imprimitive if and only if whenever $g\in G\cap\GL(q,p)$ cyclically permutes the coordinates of ${\mathbb F}^q$ as a $q$-cycle then $\det(D_g)\in R$.
\end{thrm}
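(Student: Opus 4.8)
The plan is to prove Theorem~\ref{AGL imprimitive converse} by noting that one implication is already essentially established, while the reverse implication is the genuine content for the small cases $q=2,3$. The forward direction (that $\det(D_g)\in R$ whenever $g$ is a $q$-cycle on coordinates implies every element is imprimitive) is exactly Theorem~\ref{bigonedirection}, applied with $k=q$. So the substantive work is the converse: assuming that every element of $G$ is imprimitive, I must show that any $g\in G\cap\GL(q,p)$ which cyclically permutes the coordinates as a $q$-cycle has $\det(D_g)\in R$.

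\medskip

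First I would analyze the possible forms of an element $g=D_gS^{a_g}\in G$. Since $G\le\AGL(1,p)\wr C_q$, the exponent $a_g$ lies in $\{0,1,\ldots,q-1\}$, and the hypothesis concerns precisely those $g\in G\cap\GL(q,p)$ (so the affine/translation part is trivial) that induce a full $q$-cycle on coordinates, i.e.\ $\gcd(a_g,q)=1$. By the computation recorded before Theorem~\ref{bigonedirection}, such a $g$ has characteristic polynomial $x^q-d$ with $d=\det(D_g)$. The key link is Lemma~\ref{characteristic polynomial}: $g$ is imprimitive precisely when $x^q-d$ is reducible over $\mathbb{F}_p$. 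So the converse reduces to the purely arithmetic claim that, for $q\in\{2,3\}$, the polynomial $x^q-d$ is reducible over $\mathbb{F}_p$ if and only if $d\in R$, i.e.\ $d$ is a $q$-th power in $\mathbb{F}_p^*$.

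\medskip

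This arithmetic fact is where the restriction to $q=2,3$ is essential, and it is the step I expect to be the crux. For a prime degree $q$, the polynomial $x^q-d$ is irreducible over a field $F$ of characteristic not dividing $q$ if and only if $d$ is not a $q$-th power in $F$ (this is the standard irreducibility criterion for binomials, going back to Capelli/Abel; one also needs $-d$ not a fourth power when $q=2$ and $4\mid q$, but that obstruction does not arise for $q=2,3$). Concretely: for $q=2$, $x^2-d$ factors over $\mathbb{F}_p$ iff $d$ is a square, i.e.\ $d\in R$; for $q=3$, $x^3-d$ has a root in $\mathbb{F}_p$ iff $d$ is a cube, and since a cubic is reducible exactly when it has a root, reducibility is again equivalent to $d\in R$. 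Thus for these two values reducibility of the characteristic polynomial coincides exactly with the $R$-membership condition, whereas for larger $q$ reducibility is strictly weaker (a binomial can factor without having a root), which is exactly why the biconditional fails in general and only the one direction of Theorem~\ref{bigonedirection} survives.

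\medskip

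Assembling these pieces, the converse runs as follows. Take $g\in G\cap\GL(q,p)$ realizing a $q$-cycle on coordinates; since every element of $G$ is imprimitive, $g$ is imprimitive, so by Lemma~\ref{characteristic polynomial} its characteristic polynomial $x^q-\det(D_g)$ is reducible over $\mathbb{F}_p$; by the binomial criterion for $q\in\{2,3\}$ this forces $\det(D_g)$ to be a $q$-th power, that is $\det(D_g)\in R$. Combined with the forward implication supplied by Theorem~\ref{bigonedirection}, this gives the stated equivalence. I would present the arithmetic lemma on reducibility of $x^q-d$ cleanly (handling $q=2$ and $q=3$ separately, as above) since that is the only place where any real argument beyond the earlier results is needed.
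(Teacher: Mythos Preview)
Your proof is correct but takes a different route from the paper. You obtain the converse directly from Lemma~\ref{characteristic polynomial}: if $g$ is imprimitive then its characteristic polynomial $x^q-d$ is reducible, and for $q\in\{2,3\}$ the binomial $x^q-d$ is reducible over $\mathbb{F}_p$ exactly when $d$ is a $q$-th power (a quadratic or cubic factors iff it has a root). This is short and makes the role of the restriction $q\in\{2,3\}$ transparent: for larger $q$ a binomial can factor without having a root, so reducibility no longer forces $d\in R$, and only the one direction of Theorem~\ref{bigonedirection} survives.

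The paper instead argues the contrapositive via the cycle-structure criterion of Theorem~\ref{ipartition}. Assuming $d\notin R$, it shows $g$ fixes only the zero vector and that $g^q=dI$; since $d\notin R$ forces the multiplicative order of $d$ to be divisible by the full $q$-part $q^a$ of $p-1$, every nontrivial $g$-orbit has length a multiple of $q^{a+1}$. A block containing $0$ in any putative $g$-invariant system would then have size $1$ plus a multiple of $q^{a+1}$, and the paper checks by hand (block size $p$, and block size $p^2$ when $q=3$) that this contradicts the definition of $q^a$. Your argument is more conceptual and economical; the paper's is more hands-on about the cycle structure and has the minor advantage of not leaning on the ``imprimitive $\Rightarrow$ reducible characteristic polynomial'' direction of Lemma~\ref{characteristic polynomial}, whose proof in the paper is rather terse.
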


\begin{proof}
In view of Theorem \ref{bigonedirection}, we need only to prove the converse.  Suppose that every element of $G$ is imprimitive.  We will show that that if $\det(D_g) = d\not\in R$ then $g$ is primitive by Theorem  \ref{ipartition}.  So suppose that $d\not\in R$.  It is clear that $g$ fixes the $0$ vector.  If $g$ fixes any other vector ${\bf v}$, then ${\bf v}$ is an eigenvector of $g$ with eigenvalue $1$.  As the characteristic polynomial {of $g$} is $x^q - d$ {and $d\not\in R$}, this implies that $d = 1\in R$.  Straightforward computations will show that $g^q = dI_n$.  Now, if $s\in\Z_p^*$ but there is no $t\in\Z_p^*$ with $t^q = s$, then $q\vert(p-1)$ and the highest power $q^a$ of $q$ dividing $p - 1$ also divides the order of $s$.  We conclude that any orbit of $g$ that is not the singleton orbit $\{(0,0)\}$ has order a multiple $q^{a+1}$ as $g^q$ has order a multiple of $q^a$.  Now, if $g$ is imprimitive and preserves the invariant partition ${\cal B}$, then $g$ fixes the block $B\in{\cal B}$ that contains $(0,0)$, so $B-\{(0,0)\}$ is a union of orbits of $g$.  Suppose that $\vert B\vert = p$.  Then $p - 1$ is a sum of multiples of $q^{a+1}$, and so the highest power of $q$ dividing $p - 1$ is $q^{a + 1}$, a contradiction.  Suppose that $\vert B\vert = p^2$ (and so $q = 3$).  Then $p^2 - 1$ is a multiple of $q^{a + 1}$ and as the highest power of $q$ that divides $p - 1$ is $q^a$, we conclude that $q$ divides $p + 1$.  However, $\gcd(p-1,p+1) = 2\not = q$, a contradiction which establishes the result.
\end{proof}

\paragraph{\textbf{Third construction:} From a problem of Wielandt}

Another infinite class of examples comes from the following observation. {\color{black}We say that the \emph{spectrum} of a subgroup of $S_n$ is the set of cycle types of elements it contains.}

\begin{thrm}
{\color{black}Two faithful actions of a group $G$ with the same permutation character have the same spectrum. Hence if}
the group $G$ has two faithful transitive actions, one primitive and one imprimitive, with the same permutation character, then every element in the primitive action of $G$ is an imprimitive permutation.
\end{thrm}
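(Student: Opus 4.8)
The statement has two parts, and the plan is to prove them in sequence. The first and central claim is that two faithful actions of a group $G$ with the same permutation character have the same spectrum; the second (``hence'') is an immediate consequence once we observe that an element acting in a primitive action has the same cycle type as it does in the imprimitive action, so if the imprimitive action already certifies imprimitivity (every element lies in that imprimitive group), the same cycle types appear in the primitive action and hence those elements are imprimitive permutations by Theorem~\ref{ipartition}.

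For the first part I would argue as follows. Recall that the permutation character $\pi$ of an action of $G$ on a set $\Omega$ satisfies $\pi(g) = |\Fix(g)| = $ the number of points of $\Omega$ fixed by $g$, for every $g\in G$. The key observation is that the cycle type of $g$ in the action on $\Omega$ is completely determined by the fixed-point counts of its powers: specifically, if $c_j(g)$ denotes the number of $j$-cycles of $g$ acting on $\Omega$, then for each positive integer $i$ the number of points fixed by $g^i$ is $\pi(g^i) = \sum_{j \mid i} j\, c_j(g)$, because a $j$-cycle of $g$ contributes $j$ fixed points to $g^i$ exactly when $j$ divides $i$ and contributes none otherwise. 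First I would write down this relation, then invert it by M\"obius inversion (over the divisor lattice) to express each $c_j(g)$ purely in terms of the values $\pi(g^i)$ for $i \mid j$.

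Once this inversion formula is in place the result is immediate: if two faithful actions of $G$ have the same permutation character $\pi$, then for every $g\in G$ and every $i$ the quantities $\pi(g^i)$ agree in the two actions (note $g^i$ is the same group element in both), hence every $c_j(g)$ agrees, hence the cycle type of $g$ is the same in both actions. I would remark that faithfulness is not actually needed for the spectrum equality itself, only that the two actions have equal degree (which follows from $\pi(1)$ agreeing) and equal permutation character; faithfulness is the hypothesis under which the two actions are literally permutation representations on which $G$ embeds, matching the setup of the applied case.

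The main obstacle is purely bookkeeping: getting the divisor-sum relation $\pi(g^i) = \sum_{j \mid i} j\, c_j(g)$ exactly right and confirming that M\"obius inversion recovers $c_j(g)$ from the $\pi(g^i)$ with $i \mid j$. This is routine but is where an off-by-a-factor-of-$j$ error would creep in, so I would verify it on a small example (say a single long cycle) before stating it in general. With the inversion established, assembling the two-line deduction of the theorem and then the ``hence'' clause via Theorem~\ref{ipartition} is straightforward.
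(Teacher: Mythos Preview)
Your argument is correct and follows the same route as the paper: express the fixed-point count $\pi(g^i)$ as a divisor sum over the cycle-length multiplicities, then M\"obius-invert to recover the cycle type from the character values on powers of $g$, and deduce the ``hence'' clause from the fact that imprimitivity of a permutation depends only on its cycle type. Your version is in fact slightly more careful than the paper's: you include the factor $j$ in $\pi(g^i)=\sum_{j\mid i} j\,c_j(g)$, whereas the paper writes $\sum_{f\mid d} c_f$ (dropping the weight $f$), which is a minor slip there---your formula is the correct one.
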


\begin{proof} If $g$ has $c_i$ cycles of length $i$, then for any divisor $d$ of the order of $g$, the number of fixed points of $g^d$ is $\sum_{f\mid d}c_f$; then M\"obius inversion shows that the numbers $c_i$ are determined by the character values on powers of $g$. {\color{black}So the first statement is true; and the second follows immediately.}
\end{proof}

Such groups are not easy to find: Wielandt asked in 1979 whether they could exist. An infinite family based on the exceptional groups of type $E_8$ was found by Guralnick and Saxl~\cite{gs}, while a sporadic example in the group $J_4$ was found by Breuer~\cite{breuer}.

It is worth pointing out that this construction gives examples of primitive groups where all the permutations have a common $i$-type{\color{black}, namely the block size and number of blocks of its imprimitive companion}.

\paragraph{\textbf{Fourth construction:} Some groups of diagonal type.}

\begin{thrm}
Let $T$ be a nonabelian simple group.  Then no element of the primitive group $T^2$ with the diagonal action is primitive.
\end{thrm}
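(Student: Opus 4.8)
The plan is to realise the diagonal action concretely and then, for each group element, exhibit an invariant block system; this places the corresponding permutation inside a transitive imprimitive group, which is exactly what is needed.

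First I would fix coordinates. Identify the point set $\Omega$ of the diagonal action with $T$ itself, where $T^2$ acts by $\sigma_{g,h}\colon x\mapsto gxh^{-1}$. The stabiliser of $1\in T$ is the diagonal $\{(t,t):t\in T\}$, so this is indeed the diagonal action, and it is primitive because $T$ is nonabelian simple. Thus it suffices to show that every permutation $\sigma_{g,h}$ of $T$ is imprimitive.

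The main observation is a general principle that I would state first: if a permutation $\sigma$ of a set $\Omega$ preserves a partition $\mathcal{P}$ of $\Omega$ into $k>1$ blocks each of size $m>1$, then $\sigma$ lies in the partition stabiliser $\Stab_{\Sym(\Omega)}(\mathcal{P})\cong S_m\wr S_k$ in its imprimitive action; this group is transitive and imprimitive, so $\sigma$ is an imprimitive permutation by definition. Hence it is enough to produce, for each $(g,h)$, a nontrivial $\sigma_{g,h}$-invariant partition of $T$ into equal parts. I would emphasise that invariance under the single permutation $\sigma_{g,h}$ already suffices, since passing to the full partition stabiliser supplies the transitivity demanded by the definition of an imprimitive group.

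To build such a partition I would use cosets of a well-chosen subgroup. Let $M\le T$ be a proper nontrivial subgroup with $h\in N_T(M)$, and take the partition of $T$ into left cosets $\{xM:x\in T\}$. A direct computation gives $\sigma_{g,h}(xM)=gxMh^{-1}=gxh^{-1}\,(hMh^{-1})=\sigma_{g,h}(x)\,(hMh^{-1})=\sigma_{g,h}(x)\,M$, the last step using $h\in N_T(M)$; so the partition is preserved, and it has $[T:M]>1$ blocks of size $|M|>1$, as required. It remains to check that a suitable $M$ always exists: if $h\neq 1$ take $M=\langle h\rangle$, which is nontrivial, is centralised by $h$ (so the key equality $hMh^{-1}=M$ is automatic), and is proper because a nonabelian simple group is not cyclic; if $h=1$ then $h\in N_T(M)$ for every $M$, so take any proper nontrivial subgroup, one of which exists since $|T|$ is composite. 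This covers all elements, including the identity. I expect no serious obstacle; the delicate points are purely bookkeeping, namely the coset computation above and the fact that $M$ can be kept simultaneously proper and nontrivial — which is precisely where the nonabelian simplicity of $T$ is used, to rule out $M=T$.
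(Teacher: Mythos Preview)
Your argument is correct and rests on the same block system as the paper's proof: the left cosets of $\langle h\rangle$ (for $h\neq 1$), or of any proper nontrivial subgroup when $h=1$. The paper packages this differently: rather than invoking the full partition stabiliser $S_m\wr S_k$, it exhibits the transitive imprimitive subgroup $H=T_L\times\langle h\rangle\le T^2$ containing $(g,h)$ and observes that the orbits of its normal subgroup $\langle(1,h)\rangle$ form a block system---and these orbits are precisely your cosets $x\langle h\rangle$. Your route is a touch more economical, since once the invariant equipartition is in hand the wreath product supplies transitivity for free, sparing you the paper's separate treatment of $(t,1)$, $(1,t)$, and $(t,s)$; the paper's route has the minor compensating virtue that its witness imprimitive group lies inside $T^2$ itself rather than merely in $S_{|T|}$.
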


\begin{proof}
That $T^2$ with the diagonal action is primitive follows from \cite[Theorem 4.5a (i)]{dixon}.  Also, $T^2$ contains $T$ as a regular subgroup, and we lose no generality by assuming that $T_L\le T^2$, where $T_L$ is the left regular representation of $T$.  In $T^2$, $T_L$ is centralized by subgroup isomorphic to $T$, and the centralizer in $S_T$ is $T_R$ by \cite[Lemma 4.2A]{dixon}.  So we may assume without loss of generality that $T\times T = T_L\times T_R$.  Now, both $T_L$ and $T_R$ are regular, and being so are imprimitive by \cite[Theorem 1.5A]{dixon} as they are simple and nonabelian and so have proper nontrivial subgroups.  Hence any element of $T^2$ of the form $(t,1)$ or $(1,t)$ with $t\in T$ is imprimitive.  Consider an element of the form $(t,s)$, where $t,s\in T$ and neither is the identity.  Let $H = \la(r,1),(t,s):r\in T\ra$, and let $\pi_i:T^2\rightarrow T$ be projection in the coordinate $i$, $i = 1,2$.  Of course, $\pi_1(H) = T$, while $\pi_2(H) = \la s\ra < T$.  Additionally, $H = T_L\times \la s\ra$, and is transitive as $T_L$ is transitive.  However, $\la(1,s)\ra\tl H$ is not transitive, and so its orbits form a complete block system of $H$.  Thus $H$ is imprimitive and so $(r,s)$ is imprimitive.  Thus every element of $T^2$ is imprimitive as required.
\end{proof}

\medskip

\paragraph{\textbf{Miscellaneous examples}}

We know finitely many further examples of such groups. Some come from the
computer search with the GAP implementation of the algorithm above.
We investigated the primitive groups of degree up to 120.

\begin{longtable}{r|r|l|r|r|l|r|r|l|r|r|l}
\hline
$n$
&\#
&$G$
&$n$
&\#
&$G$
&$n$
&\#
&$G$
&$n$
&\#
&$G$
\\
\hline
$15$
&$2$
&$A_{6}$
&$64$
&$2$
&$2^{6}{:}D_{14}$
&$81$
&$40$
&$3^{4}{:}2^{2+2+2}$
&$100$
&$10$
&$A_{10}^{2}{.}2^{2}$
\\
$ $
&$3$
&$S_{6}$
&$ $
&$6$
&$2^{6}{:}3^{2}{:}3$
&$ $
&$42$
&$3^{4}{:}D_{16}{:}4$
&$ $
&$11$
&$A_{10}^{2}{.}4$
\\
$16$
&$4$
&$(A_{4}\times A_{4}){:}2$
&$ $
&$7$
&$2^{6}{:}7{:}6$
&$ $
&$43$
&$3^{4}{:}(SA_{16}{:}2){:}2$
&$ $
&$12$
&$S_{10}\wr S_{2}$
\\
$ $
&$7$
&$2^{4}{.}S_{3}\times S_{3}$
&$ $
&$9$
&$2^{6}{:}3^{2}{:}S_{3}$
&$ $
&$49$
&$3^{4}{:}2^{3}{:}A_{4}$
&$ $
&$13$
&$A_{6}\wr S_{2}$
\\
$ $
&$8$
&$2^{4}{.}3^{2}{:}4$
&$ $
&$10$
&$2^{6}{:}3^{2}{:}S_{3}$
&$ $
&$50$
&$3^{4}{:}(Q_{8}{:}2){:}S_{3}$
&$ $
&$14$
&$A_{6}^{2}{.}2^{2}$
\\
$ $
&$10$
&$(S_{4}\times S_{4}){:}2$
&$ $
&$16$
&$2^{6}{:}(7\times D_{14})$
&$ $
&$51$
&$3^{4}{:}Q_{8}{.}S_{3}{:}2$
&$ $
&$15$
&$A_{6}^{2}{.}2^{2}$
\\
$25$
&$4$
&$5^{2}{:}Q(8)$
&$ $
&$17$
&$2^{6}{:}3^{2}{:}D_{12}$
&$ $
&$52$
&$3^{4}{:}(2\times Q_{8}){:}6$
&$ $
&$16$
&$A_{6}^{2}{.}4$
\\
$ $
&$5$
&$5^{2}{:}D(2\cdot 4)$
&$ $
&$18$
&$2^{6}{:}(3^{2}{:}3){:}4$
&$ $
&$53$
&$3^{4}{:}Q_{8}{.}S_{3}{:}2$
&$ $
&$17$
&$A_{6}^{2}{.}2^{2}$
\\
$ $
&$11$
&$5^{2}{:}D(2\cdot 4){:}2$
&$ $
&$25$
&$2^{6}{:}(3^{2}{:}3){:}Q_{8}$
&$ $
&$54$
&$3^{4}{:}(SA_{16}{:}2){:}3$
&$ $
&$18$
&$A_{6}^{2}{.}4$
\\
$27$
&$1$
&$3^{3}{.}A_{4}$
&$ $
&$26$
&$2^{6}{:}(3^{2}{:}3){:}8$
&$ $
&$55$
&$3^{4}{:}(Q_{8}{:}3){:}2^{2}$
&$ $
&$19$
&$A_{6}^{2}{.}2^{2}$
\\
$ $
&$3$
&$3^{3}(A_{4}\times 2)$
&$ $
&$27$
&$2^{6}{:}(3^{2}{:}3){:}D_{8}$
&$ $
&$56$
&$3^{4}{:}Q_{8}{.}S_{3}{:}2$
&$ $
&$20$
&$A_{6}^{2}{.}D_{8}$
\\
$ $
&$4$
&$3^{3}{.}2{.}A_{4}$
&$ $
&$28$
&$2^{6}{:}7{:}7{:}6$
&$ $
&$57$
&$3^{4}{:}(Q_{8}{:}3){:}4$
&$ $
&$21$
&$A_{6}^{2}{.}2^{3}$
\\
$ $
&$5$
&$3^{3}{.}S_{4}$
&$ $
&$29$
&$2^{6}{:}7^{2}{:}S_{3}$
&$ $
&$58$
&$3^{4}{:}(\mbox{GL}_{1}(3)\wr D_{4})$
&$ $
&$22$
&$A_{6}^{2}{.}D_{8}$
\\
$ $
&$8$
&$3^{3}(S_{4}\times 2)$
&$ $
&$33$
&$2^{6}{:}(3^{2}{:}3){:}SD_{16}$
&$ $
&$59$
&$3^{4}{:}Q_{16}{:}D_{8}$
&$ $
&$23$
&$A_{6}^{2}{.}D_{8}$
\\
$28$
&$1$
&$\mbox{PGL}_{2}(7)$
&$ $
&$38$
&$2^{6}{:}7^{2}{:}(3\times S_{3})$
&$ $
&$60$
&$3^{4}{:}(4\times 8){:}4$
&$ $
&$24$
&$A_{6}^{2}{.}D_{8}$
\\
$ $
&$4$
&$\mbox{PSU}_{3}(3)$
&$ $
&$42$
&$2^{6}{:}(\mbox{GL}_{3}(2)\wr 2)$
&$ $
&$61$
&$3^{4}{:}2^{2+3+1+1}$
&$ $
&$25$
&$A_{6}^{2}{.}D_{8}$
\\
$ $
&$5$
&$\mbox{P$\Gamma$U}(3, 3)$
&$ $
&$47$
&$2^{6}{:}3{.}S_{6}$
&$ $
&$63$
&$3^{4}{:}2^{2+3+1+1}$
&$ $
&$26$
&$A_{6}^{2}{.}(2\times 4)$
\\
$ $
&$9$
&$\mbox{PSL}_{2}(27)$
&$ $
&$48$
&$2^{6}{:}3{.}A_{6}$
&$ $
&$64$
&$3^{4}{:}2^{3+4}{:}4$
&$ $
&$27$
&$A_{6}^{2}{.}D_{8}$
\\
$ $
&$10$
&$\mbox{PGL}_{2}(27)$
&$ $
&$58$
&$2^{6}{:}S_{8}$
&$ $
&$66$
&$3^{4}{:}2^{2+3+1+1}$
&$ $
&$28$
&$A_{6}^{2}{.}(2\times 4)$
\\
$35$
&$1$
&$A_{8}$
&$ $
&$59$
&$2^{6}{:}A_{8}$
&$ $
&$72$
&$3^{4}{:}(\mbox{GL}_{1}(3)\wr A_{4})$
&$ $
&$29$
&$A_{6}^{2}{.}(2\times 4)$
\\
$ $
&$2$
&$S_{8}$
&$ $
&$60$
&$2^{6}{:}S_{7}$
&$ $
&$73$
&$3^{4}{:}Q_{8}{:}S_{4}$
&$ $
&$30$
&$A_{6}^{2}{.}2^{2}{:}4$
\\
$ $
&$3$
&$A_{7}$
&$ $
&$61$
&$2^{6}{:}A_{7}$
&$ $
&$74$
&$3^{4}{:}2^{3}{:}S_{4}$
&$ $
&$31$
&$A_{6}^{2}{.}(2\times D_{8})$
\\
$ $
&$4$
&$S_{7}$
&$ $
&$62$
&$2^{6}{:}\Sigma U(3, 3)$
&$ $
&$75$
&$3^{4}{:}(2\times Q_{8}){:}A_{4}$
&$ $
&$32$
&$A_{6}^{2}{.}2^{2}{:}4$
\\
$36$
&$3$
&$M_{10}$
&$ $
&$63$
&$2^{6}{:}SU_{3}(3)$
&$ $
&$76$
&$3^{4}{:}2^{3+2}{:}S_{3}$
&$ $
&$33$
&$A_{6}^{2}{.}(2\times D_{8})$
\\
$ $
&$4$
&$\mbox{PGL}_{2}(9)$
&$ $
&$64$
&$2^{6}{:}\mbox{PGL}_{2}(7)$
&$ $
&$77$
&$3^{4}{:}(Q_{8}{.}S_{3}{:}2){:}2$
&$ $
&$34$
&$A_{6}^{2}{.}(2\times D_{8})$
\\
$ $
&$5$
&$\mbox{P$\Gamma$L}(2, 9)$
&$ $
&$65$
&$A_{8}\wr S_{2}$
&$ $
&$78$
&$3^{4}{:}(SA_{16}{:}2){:}6$
&$ $
&$35$
&$A_{6}^{2}{.}2^{2}{:}4$
\\
$ $
&$8$
&$\mbox{\mbox{PSp}}_{4}(3)$
&$ $
&$66$
&$A_{8}^{2}{.}2^{2}$
&$ $
&$79$
&$3^{4}{:}(SA_{16}{:}2){:}6$
&$ $
&$36$
&$\mbox{P$\Gamma$L}(2, 9)\wr S_{2}$
\\
$ $
&$9$
&$\mbox{\mbox{PSp}}_{4}(3){:}2$
&$ $
&$69$
&$\mbox{PSL}_{2}(7)\wr S_{2}$
&$ $
&$80$
&$3^{4}{:}Q_{8}{.}S_{3}{:}4$
&$102$
&$1$
&$\mbox{PSL}_{2}(17)$
\\
$ $
&$13$
&$(A_{6}\times A_{6}){:}2$
&$ $
&$70$
&$\mbox{PSL}_{2}(7)^{2}{.}2^{2}$
&$ $
&$81$
&$3^{4}{:}Q_{8}{.}S_{3}{:}2^{2}$
&$105$
&$1$
&$\mbox{PSL}_{3}(4){.}2$
\\
$ $
&$14$
&$(A_{6}\times A_{6}){:}2^{2}$
&$ $
&$71$
&$\mbox{PSL}_{2}(7)^{2}{.}4$
&$ $
&$82$
&$3^{4}{:}2^{2+3+1+2}$
&$ $
&$2$
&$\mbox{PSL}_{3}(4){.}2$
\\
$ $
&$15$
&$(A_{6}\times A_{6}){:}4$
&$ $
&$72$
&$\mbox{PGL}_{2}(7)\wr S_{2}$
&$ $
&$83$
&$3^{4}{:}(\mbox{GL}_{1}(3)\wr D_{4}){:}2$
&$ $
&$3$
&$\mbox{PSL}_{3}(4){.}2^{2}$
\\
$ $
&$16$
&$(S_{6}\times S_{6}){:}2$
&$65$
&$1$
&$\mbox{PSL}_{2}(5^{2})$
&$ $
&$85$
&$3^{4}{:}8^{2}{:}4$
&$ $
&$4$
&$\mbox{PSL}_{3}(4){.}S_{3}$
\\
$ $
&$17$
&$(A_{5}\times A_{5}){:}2$
&$ $
&$2$
&$\mbox{P$\Sigma$L}(2, 5^{2})$
&$ $
&$88$
&$3^{4}{:}\mbox{SL}_{2}(3){:}A_{4}$
&$ $
&$5$
&$\mbox{PSL}_{3}(4){.}6$
\\
$ $
&$18$
&$(A_{5}\times A_{5}){.}4$
&$ $
&$3$
&$\mbox{PSU}_{3}(4)$
&$ $
&$91$
&$3^{4}{:}(\mbox{GL}_{1}(3)\wr S_{4})$
&$ $
&$6$
&$\mbox{PSL}_{3}(4){.}D_{12}$
\\
$ $
&$19$
&$((A_{5}\times A_{5}){:}2)2$
&$66$
&$1$
&$\mbox{PGL}_{2}(11)$
&$ $
&$92$
&$3^{4}{:}Q_{8}^{2}{:}6$
&$ $
&$7$
&$S_{8}$
\\
$ $
&$20$
&$(S_{5}\times S_{5}){:}2$
&$ $
&$2$
&$M_{11}$
&$ $
&$93$
&$3^{4}{:}Q_{8}^{2}{:}S_{3}$
&$112$
&$1$
&$\mbox{PSU}_{4}(3)$
\\
$40$
&$1$
&$\mbox{\mbox{PSp}}_{4}(3)$
&$ $
&$3$
&$M_{12}$
&$ $
&$94$
&$3^{4}{:}\mbox{GL}_{2}(3){:}D_{8}$
&$ $
&$2$
&$\mbox{PSU}_{4}(3){.}2$
\\
$ $
&$2$
&$\mbox{\mbox{PSp}}_{4}(3){:}2$
&$77$
&$1$
&$M_{22}$
&$ $
&$96$
&$3^{4}{:}\mbox{SL}_{2}(3){:}S_{4}$
&$ $
&$3$
&$\mbox{PSU}_{4}(3){.}2$
\\
$ $
&$3$
&$\mbox{\mbox{PSp}}_{4}(3)$
&$ $
&$2$
&$M_{22}{.}2$
&$ $
&$97$
&$3^{4}{:}(2^{3}{:}A_{4}){:}S_{3}$
&$ $
&$4$
&$\mbox{PSU}_{4}(3){.}2$
\\
$ $
&$4$
&$\mbox{\mbox{PSp}}_{4}(3){:}2$
&$81$
&$4$
&$3^{4}{:}D_{16}$
&$ $
&$98$
&$3^{4}{:}\mbox{GL}_{2}(3){:}(3\times S_{3})$
&$ $
&$5$
&$\mbox{PSU}_{4}(3){.}2^{2}$
\\
$45$
&$1$
&$\mbox{PGL}_{2}(9)$
&$ $
&$5$
&$3^{4}{:}SA_{16}$
&$ $
&$100$
&$3^{4}{:}Q_{8}^{2}{:}D_{12}$
&$ $
&$6$
&$\mbox{PSU}_{4}(3){.}4$
\\
$ $
&$2$
&$M_{10}$
&$ $
&$6$
&$3^{4}{:}Q_{8}{:}2$
&$ $
&$101$
&$3^{4}{:}(\mbox{SL}_{2}(3)\wr 2)$
&$ $
&$7$
&$\mbox{PSU}_{4}(3){.}2^{2}$
\\
$ $
&$3$
&$\mbox{P$\Gamma$L}(2, 9)$
&$ $
&$8$
&$3^{4}{:}SD_{16}$
&$ $
&$102$
&$3^{4}{:}\mbox{GL}_{2}(3){:}S_{4}$
&$ $
&$8$
&$\mbox{PSU}_{4}(3){.}D_{8}$
\\
$ $
&$4$
&$\mbox{\mbox{PSp}}_{4}(3)$
&$ $
&$13$
&$3^{4}{:}SA_{16}{:}2$
&$ $
&$103$
&$3^{4}{:}(2^{3}{:}A_{4}){:}S_{3}$
&$117$
&$1$
&$\mbox{PSL}_{3}(3){.}2$
\\
$ $
&$5$
&$\mbox{\mbox{PSp}}_{4}(3){:}2$
&$ $
&$14$
&$3^{4}{:}2^{2}{:}4{:}2$
&$ $
&$104$
&$3^{4}{:}(2^{3}{:}2^{2}){:}(3^{2}{:}4)$
&$120$
&$1$
&$S_{7}$
\\
$ $
&$6$
&$A_{10}$
&$ $
&$15$
&$3^{4}{:}SA_{16}{:}2$
&$ $
&$106$
&$3^{4}{:}Q_{8}^{2}{:}S_{3}^{2}$
&$ $
&$2$
&$A_{9}$
\\
$ $
&$7$
&$S_{10}$
&$ $
&$16$
&$3^{4}{:}2^{3}{:}2^{2}$
&$ $
&$107$
&$3^{4}{:}(2^{3}{:}2^{2}){:}3^{2}{:}D_{8}$
&$ $
&$6$
&$\mbox{PSL}_{3}(4)$
\\
$49$
&$2$
&$7^{2}{:}S_{3}$
&$ $
&$17$
&$3^{4}{:}D_{16}{:}2$
&$84$
&$1$
&$A_{9}$
&$ $
&$7$
&$\mbox{PSL}_{3}(4){.}2$
\\
$ $
&$7$
&$7^{2}{:}D(2\cdot 6)$
&$ $
&$18$
&$3^{4}{:}2^{2+2+1}$
&$ $
&$2$
&$S_{9}$
&$ $
&$8$
&$\mbox{PSL}_{3}(4){.}2$
\\
$ $
&$12$
&$7^{2}{:}3\times D(2\cdot 3)$
&$ $
&$19$
&$3^{4}{:}Q_{16}{:}2$
&$85$
&$1$
&$\mbox{\mbox{PSp}}_{4}(4)$
&$ $
&$9$
&$\mbox{PSL}_{3}(4){.}2$
\\
$ $
&$21$
&$7^{2}{:}3\times D(2\cdot 6)$
&$ $
&$20$
&$3^{4}{:}2^{2+1+2}$
&$ $
&$2$
&$\mbox{\mbox{PSp}}_{4}(4){.}2$
&$ $
&$10$
&$\mbox{PSL}_{3}(4){.}2^{2}$
\\
$52$
&$1$
&$\mbox{PSL}_{3}(3){.}2$
&$ $
&$21$
&$3^{4}{:}D_{16}{:}2$
&$91$
&$1$
&$\mbox{PSL}_{2}(13)$
&$ $
&$11$
&$S_{8}$
\\
$60$
&$1$
&$A_{5}^{2}$
&$ $
&$22$
&$3^{4}{:}(2\times Q_{8}){:}2$
&$ $
&$2$
&$\mbox{PGL}_{2}(13)$
&$ $
&$14$
&$\mbox{\mbox{PSp}}_{6}(2)$
\\
$ $
&$2$
&$A_{5}^{2}{.}2$
&$ $
&$23$
&$3^{4}{:}SA_{16}{:}2$
&$ $
&$3$
&$\mbox{PSL}_{2}(13)$
&$ $
&$16$
&$O^+(8, 2)$
\\
$ $
&$3$
&$A_{5}\wr S_{2}$
&$ $
&$30$
&$3^{4}{:}Q_{8}{.}S_{3}$
&$ $
&$4$
&$\mbox{PGL}_{2}(13)$
&$ $
&$17$
&$\mbox{PSO}^+(8, 2)$
\\
$ $
&$4$
&$A_{5}\wr S_{2}$
&$ $
&$31$
&$3^{4}{:}(Q_{8}{:}3){:}2$
&$100$
&$1$
&$J_{2}$
&$ $
&$18$
&$A_{10}$
\\
$ $
&$5$
&$A_{5}^{2}{.}2^{2}$
&$ $
&$32$
&$3^{4}{:}(\mbox{GL}_{1}(3)\wr 4)$
&$ $
&$2$
&$J_{2}{.}2$
&$ $
&$19$
&$S_{10}$
\\
$63$
&$1$
&$\mbox{PSU}_{3}(3)$
&$ $
&$33$
&$3^{4}{:}4^{2}{:}4$
&$ $
&$5$
&$A_{5}\wr S_{2}$
&$ $
&$20$
&$A_{16}$
\\
$ $
&$2$
&$\mbox{PSU}_{3}(3){.}2$
&$ $
&$34$
&$3^{4}{:}Q_{8}{:}D_{8}$
&$ $
&$6$
&$A_{5}^{2}{.}2^{2}$
&$ $
&$21$
&$S_{16}$
\\
$ $
&$3$
&$\mbox{PSU}_{3}(3)$
&$ $
&$35$
&$3^{4}{:}2^{3}{:}D_{8}$
&$ $
&$7$
&$A_{5}^{2}{.}4$
&&&\\
$ $
&$4$
&$\mbox{PSU}_{3}(3){.}2$
&$ $
&$37$
&$3^{4}{:}2^{2+3+1}$
&$ $
&$8$
&$S_{5}\wr S_{2}$
&&&\\
$ $
&$5$
&$\mbox{\mbox{PSp}}_{6}(2)$
&$ $
&$39$
&$3^{4}{:}D_{16}{:}4$
&$ $
&$9$
&$A_{10}\wr S_{2}$
&&&\\
\hline
\end{longtable}

\bigskip

These groups belong to the following classes (in GAP's version of the  O'Nan-Scott Theorem):

\begin{description}
\item[``1''] Affine.
\item[``2''] Almost simple.
\item[``3a''] Diagonal, Socle consists of two normal subgroups.
\item[``3b''] Diagonal, Socle is minimal normal.
\item[``4c''] Product action with the first factor primitive of type 2.
\end{description}
\medskip

The next table contains the synchronizing groups without primitive permutations.
\[
\begin{array}{l|lllll}
\hline
\mbox{Degree}&\mbox{Groups}\\
\hline
28&\PSU(3, 3)&\PGammaU(3, 3)&\PSL(2, 27)&\PGL(2, 27)\\
36&\PSp(4, 3)
&\PSp(4, 3):2\\
40&\PSp(4, 3)&\PSp(4, 3):2&\PSp(4, 3)&\PSp(4, 3):2\\
63&\PSU(3, 3)&\PSU(3, 3).2&\PSp(6, 2)\\
64&2^6:(3^2:3):Q_8&2^6:(3^2:3):8
&2^6:(3^2:3):\SD_{16}&2^6:\SigmaU(3, 3)&2^6:\SU(3, 3)\\
65&\PSL(2, 5^2)&\PSigmaL(2, 5^2)
&\PSU(3, 4)\\ \hline
\end{array}
\]
They are very few; this means that up to degree $100$, the overwhelming majority  of primitive groups without primitive permutations are non-synchronizing.

The next observation may provide some more sporadic examples of primitive groups without primitive permutations. Let $G$ be a primitive group with an imprimitive normal subgroup $N$ of index $2$. Then all elements of $N$ are imprimitive, and it is only necessary to check half the elements of $G$. For example, let $G = \PGL(2,11)$, with degree $66$. The elements of $G$ not in the imprimitive normal subgroup have cycle types $(6,12,12,12,12,12)$, $(1,5,10,10,10,10,10,10)$, and two types corresponding to cubes of the first and fifth powers of the second. Theorem \ref{ipartition} shows that all these permutations are imprimitive.

\section{An infinite hierarchy for primitive groups}\label{hier}

The previous results suggest the following hierarchy for primitive groups based on the cycle-type of permutations they contain.

A set $S\subseteq G\le S_n$ is said to be independent if
$$(\forall g\in S)\ g\not\in \langle S\setminus\{g\}\rangle.$$
Let $k$ and $n$ be two natural numbers. The class ${\AP} k$ contains all degree $n$ primitive groups $G$ in which all independent $k$-subsets are primitive, that is, given any independent $k$-set $S\subseteq G$, and given any minimally imprimitive   group $H\le S_n$, we have that $\langle S,H\rangle$ is a primitive group.
For example, it is obvious that every transitive group of prime degree
belongs to ${\AP} 1$.

The class ${\EP} k$ contains all degree $n$ primitive groups $G$ in which there exist some  $k$-subsets which are primitive.
For example,  $A_6$ (degree $15$) does not contain primitive permutations,
but contains several primitive pairs and hence this group belongs to
${\EP} 2$.

The class $\mathbf{NP} k$ contains all degree $n$ primitive groups $G$ in which there are no $k$-subsets which are primitive.
For example,  $A_6$ (degree $15$)  belongs to  $\mathbf{NP} 1$. Also, the groups $S_3\wr 2^m$ belongs to $\mathbf{NP}m$. So our hierarchy is infinite.

It is clear that if $G$ belongs to ${\AP} k$, then it also belongs to
${\AP} m$, for all $m\ge k$.

Also, if $G$ belongs to $\mathbf{NP} k$, then it also belongs
to $\mathbf{NP} m$, for all $m\le k$. Similarly, if $G$ belongs to
$\mathbf{EP} k$, then it belongs to $\mathbf{EP} m$, for all $m\ge k$.

A group $G$ belongs to the class ${\NAP} k$ if the group belongs to
$\mathbf{AP} k$ and to $\mathbf{NP} (k-1)$. Similarly, a group belongs to
${\NEP} k$ if the group belongs to $\mathbf{EP} k$ and to $\mathbf{NP} (k-1)$.

For a group $G$ we denote by $\NEP(G)$ the value of $k$ such that $G$
belongs to ${\NEP} k$; $\NAP(G)$ is defined in the same way.

\subsection{Testing for $\EP$ and $\AP$}

To test membership of a group $G$ in $\EP$ or $\AP$ for a given $k$,
we need to iterate through the $k$-subsets of $G$. Clearly it is sufficient
to do so up to conjugacy in $N=N_{S_n}(G)$.

An algorithm that will run through sequences of group elements up to conjugacy
is given in \cite[Section V.5]{hulpkediss} in the context of searching for
homomorphisms. Its input consists of $N$-conjugacy classes of elements of
$G$.

While enumerating sequences clearly enumerates sets,
there is a substantial amount of duplication.
By demanding that the conjugacy class for the $i+1$-th sequence element is not smaller
(in some arbitrary ordering of conjugacy classes)
than the class for the $i$-th element, it is
possible to eliminate a substantial amount of duplication of the same set by
different sequences. We found this to be a reasonable tradeoff of runtime
efficiency and implementation cost.

We consider first the
case of $\EP$, that is we test whether a primitive set exists:
For every sequence $A$ we test whether $\gen{A}$ is a
primitive subgroup of $S_n$ (in which case the property is true), or whether
$\gen{A}$ is transitive (in which case $A$ can be discarded). We collect a
pool of those
$A$ which generate intransitive subgroups.

We reduce this pool by eliminating sequences if the groups generated by them
are $N$-conjugate.

Next we test for a failure of $A$ to be primitive within $G$: We determine
the maximal transitive but imprimitive subgroups of $G$ and (if they are not
too large) calculate the
subgroups thereof. If $A$ is conjugate to any of these subgroups it cannot be
primitive.

It turns out that for all examples we considered so far this strategy was
successful in either eliminating all $A$'s or finding one $A$ that actually
generated a primitive group. It is possible however that neither case holds,
that is an $A$ that generates an intransitive group cannot be eliminated.

In this case we observe that the definition of a primitive set is clearly
equivalent to the property that there exists a transitive, imprimitive, group
$H\le S_n$ such that $A\subset H$ (because then $\gen{H,A}=H$). We thus test
whether $A$ is contained in a maximal imprimitive subgroup $H\le S_n$. These
groups
are wreath products of symmetric groups, parameterized by partitions of $n$
into blocks of equal size, and thus can be constructed easily.

For efficiency, we note that we need to consider $H$ only up to conjugacy by
$M=N_{S_n}(\gen{A})$. This is achieved by selecting $H$ up to conjugacy (note
that $H$ is maximal in $S_n$ and thus self-normalizing) and let
$r$ run through a set of representatives of the double cosets $M\setminus
S_n/H$. For each $r$ we test whether $A^r\subset H$.
\medskip

The test for $\AP$ proceeds similar, but with different stopping criteria: We
iterate over sequences $A$. If $\gen{A}$ is primitive, or $A$ is not
independent we discard $A$. If $\gen{A}$ is transitive and imprimitive the
group is not in $\AP$. We collect those $A$ for which $\gen{A}$ is
intransitive.

As soon as a single independent imprimitive sequence is found the search terminates
(as the group is not in $\AP k$).

Otherwise we proceed as for $\EP$ and reduce sequences up to subgroup
conjugacy and test that indeed all remaining sequences are primitive.

\subsection{Building groups belonging to $\mathbf{NEP} k$ and $\mathbf{NAP} k$}

According to GAP, up to degree $63$, every primitive group in $\mathbf{NP} 1$ belongs to $\mathbf{NEP} 2$.

\subsection{A partial order and an equivalence relation on primitive groups}\label{same spectrum}

{\color{black}Recall that, for} a permutation group, $G\le S_n$, the set of cycle-types of elements of
$G$ is said to be the \emph{spectrum} of $G$.
The relation \emph{equality of spectrum} is obviously an equivalence
relation on the subgroups of $S_n$, with conjugacy implying equality of
spectrum. These equivalence classes can be ordered by inclusion of spectra,
yielding a lattice that is an image of the lattice of classes of groups
under inclusion.

We note that the spectrum cannot determine primitivity. {\color{black}The solutions to
Wielandt's problem in Section~\ref{infinite} demonstrate this. Also,}
primitive group
$4$ in degree $25$ -- $5^2{:}Q(8)$ -- has three maximal subgroups of index
$2$ which all are imprimitive and all have the same spectrum as the whole group.

{\color{black} The table below shows that affine groups with degrees that are perfect square predominate for small degrees, but the examples answering Wielandt's question  show that there are others:}
\[
\begin{array}{|l|l|}
\hline
\mbox{Degree}&\mbox{Groups with the same spectrum}\\
\hline

9  &\bigl\{3^2{:}Q_8, 3^2{:}4\bigr\}\\
16 & \bigl\{ 2^4.S_3 \times S_3, (A_4 \times A_4){:}2 \bigr\}\\
25 & \bigl\{5^2{:}D(2*4){:}2, 5^2{:}D(2*4)\bigr\}, \bigl\{5^2{:}((Q_8{:}3)'2), 5^2{:}4\times D(2*3)\bigr\}\\
   & \bigl\{5^2{:}(Q_8{:}3), 5^2{:}Q_{12}\bigr\}\\
49 & \bigl\{7^2{:}3 \times Q_{12}, 7^2{:}3 \times (Q_8{:}3) \bigr\}, \bigl\{ 7^2{:}3 \times Q_8, 7^2{:}12 \bigr\}, \\
   &\bigl\{ 7^2{:}(3 \times Q_{16}), 7^2{:}24 \bigr\}, \bigl\{7^2{:}Q_{12}, 7^2{:}Q_8{:}3 \bigr\}, \bigl\{ 7^2{:}Q_8, 7^2{:}4 \bigr\}, \bigl\{ 7^2{:}Q_{16}, 7^2{:}8 \bigr\}\\
64& \bigl\{ 2^6{:}(3^2{:}3){:}4, 2^6{:}(3^2{:}3){:}Q_8\bigr\},
\bigl\{2^{6}{:}(3\wr A_{3}){:}2, 2^{6}{:}(3\wr A_{3}){:}2 \bigr\}\\
81& \bigl\{
3^{4}{:}D_{16}, 3^{4}{:}SA_{16}, 3^{4}{:}SA_{16}{:}2,
  3^{4}{:}D_{16}{:}2, 3^{4}{:}2^{2+2+1}, 3^{4}{:}Q_{16}{:}2,
    3^{4}{:}2^{2+1+2}, 3^{4}{:}D_{16}{:}2, 3^{4}{:}SA_{16}{:}2, \\
& 3^{4}{:}2^{2+2+2}, 3^{4}{:}D_{16}{:}4 \bigr\},
 \bigl\{  3^{4}{:}Q_{8}{:}2, 3^{4}{:}2^{3}{:}2^{2},
  3^{4}{:}(2\times Q_{8}){:}2 \bigr\},\\
&\bigl\{ 3^{4}{:}SA_{16}{:}2, 3^{4}{:}Q_{8}{:}D_{8}, 3^{4}{:}2^{2+3+1},
  3^{4}{:}D_{16}{:}4, 3^{4}{:}(SA_{16}{:}2){:}2,
    3^{4}{:}Q_{16}{:}D_{8}, 3^{4}{:}2^{2+3+1+1},
    3^{4}{:}2^{2+3+1+1}\bigr\},\\
&\bigl\{ 3^{4}{:}(Q_{8}{:}3){:}2, 3^{4}{:}(Q_{8}{:}3){:}2^{2} \bigr\},
\bigl\{ 3^{4}{:}8{.}D_{8}, 3^{4}{:}8{.}D_{8}{:}2\bigr\},
\bigl\{ 3^{4}{:}D_{16}{:}4, 3^{4}{:}16{:}4 \bigr\},\\
& \bigl\{ 3^{4}{:}(8\times D_{10}), 3^{4}{:}5{:}2^{2+1+2} \bigr\},
\bigl\{  3^{4}{:}(Q_{8}{:}2){:}S_{3}, 3^{4}{:}Q_{8}{.}S_{3}{:}2,
  3^{4}{:}Q_{8}{.}S_{3}{:}2, 3^{4}{:}Q_{8}{.}S_{3}{:}2^{2} \bigr\},\\
&\bigl\{ 3^{4}{:}(SA_{16}{:}2){:}3, 3^{4}{:}(SA_{16}{:}2){:}6 \bigr\},
\bigl\{ 3^{4}{:}(Q_{8}{:}3){:}4, 3^{4}{:}2^{3+2}{:}S_{3} \bigr\},
\bigl\{  3^{4}{:}2^{2+3+1+1}, 3^{4}{:}2^{2+3+1+2} \bigr\},\\
&\bigl\{ 3^{4}{:}(Q_{8}{.}S_{3}{:}2){:}2, 3^{4}{:}Q_{8}^{2}{:}6\bigr\},
\bigl\{ 3^{4}{:}4{.}A_{6}, 3^{4}{:}4{.}S_{5} \bigr\},
\bigl\{ 3^{4}{:}4{.}A_{6}, 3^{4}{:}4{.}S_{5} \bigr\}
\\

\hline
\end{array}
\]

There is at least one easy way to guarantee two primitive groups of the same degree that are not isomorphic have the same spectrum.

\begin{lem}\label{samespectrum}
Let $K$ be a primitive group of degree $m$ acting on $\Omega$ that is not a regular cyclic group of prime order, and $G$ and $H$ nonisomorphic transitive subgroups of $S_n$ with the same spectrum.  Then $K\wr G$ and $K\wr H$ with the product action are nonisomorphic primitive groups with the same spectrum.
\end{lem}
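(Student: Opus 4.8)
The plan is to prove the three assertions—primitivity, equality of spectra, and non-isomorphism—separately. Throughout I write $W=K\wr G=K^n\rtimes G$ and $W'=K\wr H=K^n\rtimes H$ for the two product-action groups (each of degree $m^n$, where $G,H\le S_n$), and I write $B=K^n$ for the base group of $W$. Primitivity of both groups is immediate from \cite[Lemma 2.7.A]{dixon}: since $K$ is primitive and not cyclic of prime order, and $G$ (respectively $H$) is transitive, the product action is primitive. So the substance is in the other two claims.

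For equality of spectra the key observation is that the cycle type of an element $(f;\sigma)\in W$, with $f\in K^n$ and $\sigma\in G\le S_n$, in the product action on $\Omega^n$ depends only on the cycle type of $\sigma$ and on the action of $K$ on $\Omega$. Indeed, $\sigma$ preserves each of its cycles, so $\Omega^n$ decomposes as a Cartesian product over the cycles of $\sigma$ and $(f;\sigma)$ acts as the corresponding direct product of permutations; the contribution of a single cycle of length $\ell$ is determined by $\ell$ together with the conjugacy class (hence the cycle type on $\Omega$) of the cycle product of the relevant entries of $f$, and the cycle type of a direct product of permutations is a function of the cycle types of the factors. As $f$ ranges over $K^n$, these cycle products range freely and independently over $K$. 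Hence the spectrum of $W$ is the union, over the cycle types occurring in $G$, of sets depending only on $K$; since $G$ and $H$ have the same spectrum by hypothesis, $W$ and $W'$ have the same spectrum.

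For non-isomorphism, first note that $K$ is centerless: if $Z(K)\ne 1$ then, as $K$ is primitive, the nontrivial normal subgroup $Z(K)$ is transitive and, being central and abelian, is regular with $K\le C_{S_m}(Z(K))=Z(K)$, forcing $K$ to be cyclic of prime order, which is excluded. If $|G|\ne|H|$ then $|W|=|K|^n|G|\ne|K|^n|H|=|W'|$ and we are done, so assume $|G|=|H|$. The strategy is to show that $B$ is \emph{characteristic} in $W$: then any isomorphism $W\to W'$ carries $B$ onto the base group $B'$ of $W'$ and induces $G\cong W/B\to W'/B'\cong H$, contradicting $G\not\cong H$. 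As a first step towards this, since $Z(K)=1$ one checks that $C_W(B)=1$ (an element $(f;\sigma)$ centralizing $B$ must fix every direct factor $K_i$, forcing $\sigma=1$, and then $f\in Z(K)^n=1$). Consequently every minimal normal subgroup $M$ of $W$ lies in $B$ (otherwise $M\cap B=1$ gives $[M,B]=1$, so $M\le C_W(B)=1$), and therefore $\soc(W)\le \soc(B)=\soc(K)^n$. When $\soc(K)$ is nonabelian, $\soc(W)$ is a product of nonabelian simple groups and one can recover the grouping of its simple components into the $n$ blocks—and hence $B$—from the action of $W$ on the set of these components, yielding $G\cong W/B$ as required.

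The main obstacle is the affine case, where $\soc(K)$ is elementary abelian and $K=V\rtimes K_0$ with $K_0\le\GL(d,p)$ irreducible. Here $\soc(W)=V^n$ is abelian, the component-permutation argument is unavailable, and $B$ need not even be literally characteristic at the bottom layer (when $K_0$ is abelian, as in $K=\mathrm{AGL}(1,p)$). The plan in this case is to peel off $\soc(W)$ and argue that $G$ is nonetheless recoverable up to isomorphism from the abstract group $W$—for instance, by showing that the decomposition $V^n=V_1\oplus\cdots\oplus V_n$ into the $n$ coordinate blocks is the canonical block system preserved by the linear part $K_0\wr G\le\GL(dn,p)$, so that $G$ is the induced group on the blocks. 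Making this recovery precise and uniform, so that it survives passage to an abstract isomorphism rather than merely a linear conjugacy, is where the real work lies; the nonabelian-socle case above should be regarded as the easy case and this affine analysis as the crux.
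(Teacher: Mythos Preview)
Your argument for equality of spectra is correct but more laborious than the paper's. The paper observes that both $K\wr G$ and $K\wr H$ sit inside $K\wr S_n$, and that for any $g\in K\wr G$ with coordinate action $\bar g\in G$ one can pick $\bar h\in H$ of the same cycle type (possible since $G$ and $H$ have the same spectrum), choose $\delta\in K\wr S_n$ conjugating $\bar g$ to $\bar h$, and note that $\delta^{-1}g\delta$ then has coordinate action $\bar h$ and base part in $K^n$, hence lies in $K\wr H$. This conjugation argument sidesteps the explicit cycle-product analysis you carry out; your computation is not wrong, but the paper's route is shorter and avoids the bookkeeping over cycles of $\sigma$.

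On non-isomorphism you have in fact done more than the paper, which simply asserts ``$K\wr G$ and $K\wr H$ are not isomorphic as $G$ and $H$ are not isomorphic'' without further justification. Your instinct to prove this by showing the base group is characteristic is the standard approach, and your treatment of the nonabelian-socle case is sound. You are also right that the affine case, where $\soc(K)$ is elementary abelian, is genuinely delicate: cancellation in wreath products is not automatic, and the coordinate decomposition of $V^n$ need not be preserved by an abstract isomorphism. Your proposal is honest in flagging this as the crux and leaving it unfinished; the paper does not address it at all. For the application that follows in the paper one really only needs the two groups to be distinct primitive groups with the same spectrum, so in practice the abstract-isomorphism claim could be weakened or bypassed, but as the lemma is stated neither your argument nor the paper's is complete on this point.
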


\begin{proof}
That $K\wr G$ and $K\wr H$ are primitive follows from \cite[Lemma 2.7.A.]{dixon} as $K$ is cyclic of prime degree.  Also, $K\wr G$ and $K\wr H$ are not isomorphic as $G$ and $H$ are not isomorphic.  Clearly $K\wr G$ and $K\wr H$ are contained in $K\wr S_n$.  It is also clear that as $K^n\le (K\wr G)\cap(K\wr H)$, if $g\in K\wr G$ has cycle structure different from every element of $K\wr H$, it must be the case that $g$ nontrivially permutes the coordinates of $\Omega^n$.  For $g\in K\wr S_n$, we denote the action of $g$ on the coordinates of $\Omega^n$ by $\bar{g}$.  Now, as $G$ and $H$ have the same spectrum, there exists $\bar{h}\in H$ with the same cycle structure as $\bar{g}$.  Then $\bar{h}$ and $\bar{g}$ are conjugate in $S_n$, and so there exists $\delta\in K\wr S_n$ such that $\overline{\delta^{-1}g\delta} = \bar{h}$.  But then $\delta^{-1}g\delta\in K\wr H$ and there is no element of $K\wr G$ of cycle structure different from that of $g$.  Thus $K\wr G$ and $K\wr H$ have the same spectrum.
\end{proof}

\begin{thrm}
There exist primitive groups $G$ and $H$ of the same degree with the same spectrum and $G$ is in $NEPk$ but $H$ is not.
\end{thrm}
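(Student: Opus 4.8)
The plan is to build the pair with Lemma~\ref{samespectrum} and then separate the two groups by their $\NEP$ value, exploiting the fact that this value is sensitive to an invariant of the top group that the spectrum cannot detect. Concretely, I would take $K=S_3$ (primitive on three points and not a regular cyclic group of prime order) together with two nonisomorphic regular permutation groups $X_1,X_2$ of the same order $2^t$ whose sets of element orders coincide. In the regular representation the cycle type of an element is determined by its order, so equal sets of element orders give $X_1$ and $X_2$ the same spectrum, while they may still differ in their minimal number of generators $d(X_i)$. The smallest such choice is $X_1=C_4\times C_2\times C_2$ and $X_2=C_4\times C_4$, both of order $16$ with element orders $\{1,2,4\}$ but with $d(X_1)=3$ and $d(X_2)=2$. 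By Lemma~\ref{samespectrum}, the groups $G=S_3\wr X_1$ and $H=S_3\wr X_2$ are then nonisomorphic primitive groups of the common degree $3^{16}$ with the same spectrum.

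It then remains to show $\NEP(G)\neq\NEP(H)$. First, by the First construction of Section~\ref{infinite} a regular noncyclic group contains no transitive cyclic subgroup, so neither $G$ nor $H$ has a primitive element; thus both lie in $\mathbf{NP}1$ and have $\NEP$ value at least $2$. The basic lower bound comes from the projection argument of that construction: if a subset $A$ projects onto a proper, hence intransitive, subgroup of $X_i$ on the coordinates, then $\langle A\rangle$ preserves the product block system coming from the orbits on coordinates, so $A$ is not primitive. Hence every primitive set of $S_3\wr X_i$ has at least $d(X_i)$ elements, giving $\NEP(G)\ge 3$ and $\NEP(H)\ge 2$. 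For the matching upper bounds I would exhibit explicit primitive sets; in the simplest form, since $S_3\wr X_i$ is itself primitive, any generating set is a primitive set, so $\NEP(S_3\wr X_i)$ is at most the minimal number of generators of $S_3\wr X_i$.

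The main obstacle is the exact determination of the two $\NEP$ values, since a primitive set need not generate the whole group, nor even a transitive subgroup, so $\NEP$ is not literally a generator count. I would handle this inside the affine embedding $S_3\wr X_i\le\AGL(2^t,3)$ afforded by $S_3=\AGL(1,3)$, where Lemma~\ref{characteristic polynomial} converts the question into one about linear parts: a primitive set must generate a subgroup whose linear action on $\mathbb{F}_3^{2^t}$ is irreducible, and the minimal number of elements needed for this is controlled by the minimal generation of an irreducible subgroup of the monomial group $C_2\wr X_i$, an invariant that depends on the group structure of $X_i$ and not on its spectrum. The genuinely delicate step is to rule out that a smaller primitive set escapes into some non-affine transitive imprimitive overgroup; this is exactly the situation that the $\EP$/$\AP$ test built on the algorithm of Section~\ref{algorithm} is designed to decide, so for this concrete pair the two values can alternatively be confirmed directly from the GAP implementation. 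Because the discrepancy $d(X_1)\ne d(X_2)$ is invisible to the spectrum but drives the minimal size of a primitive set, it forces $\NEP(G)\ne\NEP(H)$, and taking $k=\NEP(G)$ yields $G\in\NEP k$ while $H\notin\NEP k$.
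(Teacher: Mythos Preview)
Your overall strategy---build the pair as product-action wreath products $K\wr X_i$ over two regular top groups with identical element-order sets but different minimal generator numbers, invoke Lemma~\ref{samespectrum} for the spectrum, bound $\NEP$ below by projection to $X_i$ and above by an explicit generating set---is exactly the paper's approach. The problem is your particular choice $X_1=C_4\times C_2^2$, $X_2=C_4\times C_4$: the gap $d(X_1)-d(X_2)=1$ is too small for the argument to close. Projection gives $\NEP(G)\ge 3$, but the only upper bound you actually establish for $H=S_3\wr X_2$ is $\NEP(H)\le d(H)$, and since the abelianisation of $S_3\wr(C_4\times C_4)$ is $C_2\times C_4\times C_4$, one has $d(H)\ge 3$. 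So your two bounds sit at the same level and never separate $\NEP(G)$ from $\NEP(H)$. The proposed fallbacks do not rescue this: the {\sf GAP} computation you invoke would have to run at degree $3^{16}\approx 4.3\times 10^{7}$, which is well beyond the range of the primitive-group machinery, and the sketch via irreducible subgroups of $C_2\wr X_i$ is not carried through (nor does it, as you note yourself, deal with block systems that do not come from the affine structure).

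The paper avoids all of this simply by widening the gap. It takes a $2$-generated $K$ and, for the top groups, a $2$-generated nonabelian group $G_1$ of order $p^5$ (with $p\ge 5$) in which every nontrivial element has order $p$, versus $H_1=\Z_p^{5}$. Both are regular with the same spectrum. Now $K\wr G_1$ is visibly generated by four elements (two lifting the generators of $G_1$, two generating $K$ in a single coordinate), so it lies in $\mathbf{EP}4$; while any four elements of $K\wr H_1$ project into a proper subgroup of $\Z_p^{5}$, hence preserve a product block system, so $K\wr H_1\in\mathbf{NP}4$. This yields $\NEP(K\wr G_1)\le 4<5\le\NEP(K\wr H_1)$ with no computation and no need to analyse non-generating primitive sets. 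In your framework the missing inequality is $d(K)+d(X_{\mathrm{small}})<d(X_{\mathrm{large}})$; with $d(K)=2$ you need a generator gap of at least three between the two top groups, which your pair does not provide but the paper's pair $(G_1,\Z_p^{5})$ does.
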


\begin{proof}
Let $K\le S_n$ be primitive but not cyclic of prime order generated by two elements (for example, a subgroup of $\AGL(1,q)$ for some prime $q$ that is not cyclic).  There exists a nonabelian group $G_1$ of order $p^5$ for $p\ge 5$ a prime such that every element of $G_1$ has order $p$ and is generated by two elements \cite{Bender1927}.  Let $H_1 = \Z_p^5$.  Then the spectrum of $G_1$ and $H_1$ are the same, and so by Lemma \ref{samespectrum} $G = K\wr G_1$ and $H = K\wr H_1$ have the same spectrum and are not isomorphic.  As $H_1$ is generated by five elements, we have that at least five elements are needed to generate $H$.  So $H\in NEP4$.  Let $g_1$ and $g_2$ generate $G_1$ and $g_3$ and $g_4$ be elements of $K^{p^5}$ that generate $K$ in the first coordinate and are the identity in all others.  Then $G = \la g_1,g_2,g_3,g_4\ra$ so that $G\not\in NEP4$.
\end{proof}

There are infinite families of nonisomorphic primitive groups with the same spectrum.

\begin{lem}
Let $p\equiv 3\ (\mod 4)$.  Then $p^2:Q_8$ and $p^2:4$ are nonisomorphic primitive groups of degree $p^2$ with the same spectrum.
\end{lem}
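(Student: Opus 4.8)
The plan is to realize both groups concretely as affine permutation groups on $V=\mathbb{F}_p^2$ and then to compare their spectra cycle-type by cycle-type. I write each group as $V{:}H$ with $V$ acting by translations and $H\le\GL(2,p)$ the point stabiliser: for $p^2{:}Q_8$ take $H=Q_8$ in its faithful $2$-dimensional representation, and for $p^2{:}4$ take $H=\la c\ra$ with $c$ the companion matrix of $x^2+1$. Nonisomorphism is then immediate, since $|p^2{:}Q_8|=8p^2\neq 4p^2=|p^2{:}4|$. For primitivity I would invoke the fact (already exploited in the proof of Lemma~\ref{characteristic polynomial}, where $V$-invariant partitions are coset partitions of subspaces) that an affine group $V{:}H$ is primitive exactly when $H$ acts irreducibly on $V$. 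Since $p\equiv 3\ (\mod 4)$ the polynomial $x^2+1$ is irreducible over $\mathbb{F}_p$, so $c$ has no eigenvector and $\la c\ra$ is irreducible; and every order-$4$ element of $Q_8$ likewise has characteristic polynomial $x^2+1$ (trace $0$, determinant $1$), so $Q_8$ fixes no line either.

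Next I would reduce the spectrum comparison to a single inclusion. Any irreducible order-$4$ element of $\GL(2,p)$ has eigenvalues the primitive fourth roots of unity, hence characteristic polynomial $x^2+1$, and is therefore $\GL(2,p)$-conjugate to the image of $i\in Q_8$; conjugating, we may assume $\la c\ra\le Q_8$, so that $p^2{:}4\le p^2{:}Q_8$ and one inclusion of spectra is automatic. For the reverse inclusion I would show that every element of $p^2{:}Q_8$ has a cycle type already realised in $p^2{:}4$, by classifying elements $x\mapsto Ax+v$ according to their linear part $A\in Q_8=\{\pm I,\pm i,\pm j,\pm k\}$.

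The computation rests on one observation: if $\det(A-I)\neq 0$ then $x\mapsto Ax+v$ has a unique fixed point and is $\AGL(2,p)$-conjugate (by a translation) to the linear map $x\mapsto Ax$, so its cycle type depends only on the $\GL(2,p)$-class of $A$. For the six order-$4$ matrices $A=\pm i,\pm j,\pm k$ we have $1\notin\mathrm{spec}(A)$ and $A^2=-I$, so each nonzero vector lies in an orbit $\{x,Ax,-x,-Ax\}$ of size $4$; every such element therefore has cycle type $(1,4^{(p^2-1)/4})$ (note $8\mid p^2-1$). For $A=-I$ we likewise get a unique fixed point and type $(1,2^{(p^2-1)/2})$, and for $A=I$ the elements are translations, giving the identity $(1^{p^2})$ or, for $v\neq 0$, the type $(p^{\,p})$. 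These four cycle types are precisely those occurring in $p^2{:}4$ (where the linear parts run through $I,c,-I,-c$), so the two spectra coincide.

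The main obstacle is the point that the \emph{six} order-$4$ elements of $Q_8$ — four more than the cyclic group $\la c\ra$ supplies — contribute \emph{no} new cycle type. This is exactly what the characteristic-polynomial coincidence ($x^2+1$ for all of them), together with the unique-fixed-point reduction, delivers: since $A-I$ is invertible whenever $A$ has order $4$ or equals $-I$, adjoining an arbitrary translation never alters the cycle type, and all order-$4$ linear parts are mutually conjugate in $\GL(2,p)$. Verifying these two facts carefully is the crux; the remaining bookkeeping ($8\mid p^2-1$, counting orbit sizes) is routine.
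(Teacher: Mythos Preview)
Your argument is correct, and it takes a genuinely different route from the paper. The paper locates $Q_8$ as a Sylow $2$-subgroup of $\SL(2,p)$ via Gorenstein's theorem that such Sylow subgroups are generalized quaternion, and then establishes primitivity of $p^2{:}4$ indirectly by quoting a classification (Dobson--Witte) of maximal solvable imprimitive subgroups of $S_{p^2}$ with elementary abelian Sylow $p$-subgroup, observing that $\AGL(1,p)\times\AGL(1,p)$ has Sylow $2$-subgroup $\Z_2^2$ and so cannot contain an element of order~$4$. The spectrum equality is then asserted at the end with little detail.

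Your approach is more elementary and self-contained: you bypass both external references by using the irreducibility criterion for primitivity of affine groups directly, checking it via the characteristic polynomial $x^2+1$ (irreducible precisely because $p\equiv 3\pmod 4$), and you make the spectrum comparison fully explicit by reducing each coset of $V$ to its linear part through the unique-fixed-point/translation-conjugacy trick. The only point you take for granted that the paper actually proves is the existence of a faithful $2$-dimensional representation of $Q_8$ over $\mathbb{F}_p$; you might add a line exhibiting $i=\left(\begin{smallmatrix}0&-1\\1&0\end{smallmatrix}\right)$ and $j=\left(\begin{smallmatrix}a&b\\b&-a\end{smallmatrix}\right)$ with $a^2+b^2=-1$ (which always has a solution in $\mathbb{F}_p$). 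What you gain is a proof that needs no literature beyond linear algebra; what the paper's route buys is a conceptual explanation of \emph{why} $Q_8$ appears here at all, namely as the full Sylow $2$-subgroup of $\SL(2,p)$.
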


\begin{proof}
First recall that $\SL(2,p)$ has order $(p^2 - 1)p$.  As $p\equiv 3\ (\mod 4)$, a Sylow $2$-subgroup of $\SL(2,p)$ has order $8$.  By \cite[Theorem 8.3 (ii)]{Gorenstein1968} a Sylow $2$-subgroup of $\SL(2,p)$ is generalized quaternion, and a Sylow $2$-subgroup of $\SL(2,p)$ is $Q_8$.  Let $I$ be the $2\times 2$ identity matrix.  Then $-I\in\SL(2,p)$ and so is contained in a Sylow $2$-subgroup $Q$ of $\SL(2,p)$.  As $Q\cong Q_8$ has a unique subgroup of order $2$, namely $\la -I\ra$ whose only fixed point in its action on ${\mathbb F}_{p^{\color{black}2}}$ is $(0,0)$, we see that each element of order $4$ in $Q$ is a product of disjoint $4$-cycles with one fixed point.  Let $g\in Q$ be a $4$-cycle, and $P = \{(i,j)\mapsto (i + a,j + b):a,b\in\Z_p\}$ so that $P\tl \ASL(2,p)$.  Then $P\tl \la P,g\ra {\color{black}= p^2:4}$ is solvable, and by \cite[Theorem 4 (4)]{DobsonW2002} and Burnside's Theorem \cite[Theorem 3.5B]{dixon} a maximal solvable imprimitive subgroup of $S_{p^2}$ {\color{black} with Sylow $p$-subgroup $\Z_p^2$} is $\AGL(1,p)\times\AGL(1,p)$.  Again as $p\equiv 3\ (\mod 4)$ the group $\AGL(1,p)\times\AGL(1,p)$ has Sylow $2$-subgroup $\Z_2^2$, and so ${\color{black}p^2:4}$ is primitive.  Then each primitive subgroup of $\ASL(2,p)$ of the form $p^2:4$ is a subgroup of index $2$ in some subgroup of $\ASL(2,p)$ of the form $p^2:Q_8$.  Given that each element of order $4$ in $p^2:Q_8$ is a product of $4$-cycles and one fixed point, it is easy to see that $p^2:4$ has the same spectrum as $p^2:Q_8$.
\end{proof}

\section{On the problems in \cite{lopes}}\label{problopes}

 Lopes \cite{lopes} introduces  two families of primitive permutations, proves results about them and asks a number of questions. In this section we recall the concepts, extend his results and solve some of the questions.

 \begin{defin}
Let  $n$ be a natural number and consider a partition $P = (p_1, p_2, \ldots, p_l)$ of $n$, for $l>1$. If the $p_i$ can be rearranged into subsets so that the sum of the elements of each subset is equal to a constant $m$,

that divides $n$, then $P$ is said to be an $m$-partition.

If the largest part, say $p_l$,  is divisible by $m$, and the remaining parts $p_1,\ldots,p_{l-1}$ can be rearranged in sets whose sum is equal to $m$, then we say that $P$ is a special $m$-partition. (Note that a special $m$-partition is not generally an $m$-partition!)
\end{defin}

These concepts can be illustrated by examples that we borrow from \cite{lopes}. Consider $P=(2,3,5)$; this is an $m$-partition  as the parts can be arranged into two sets $\{\{2,3\},\{5\}\}$, with the sum of the elements in each set being equal to $5$. An example of a special $m$-partition, is $P=(2,3,10)$. The largest part ($10$) has $5$ among its divisors, and the remaining parts can be rearranged in a set $\{2,3\}$ whose sum is $5$. Another example is $P=(1,2,5,7,17,19,23,111)$. The largest part ($111$) has $37$ among its divisors, and the other parts can be rearranged in sets adding up to that number: $\{\{2,5,7,23\},\{1,17,19 \}\}$.

We state (and use Theorem \ref{ipartition} to prove) the main results of \cite{lopes}.
\begin{thrm}(\cite{lopes})\label{lps}
Let $\Omega$ be a set of size $n$ and let $P=(p_1,\ldots,p_l)$ be a partition of $n$, in which the $p_i$ are pairwise distinct.  Let $S_n$ be the symmetric group on $\Omega$. Then
 \begin{enumerate}
\item\label{(1)} if $l=2$ and $p_1,p_2$ are co-prime, then any permutation $g\in S_n$ of type $P$ is primitive; if in addition $p_1,p_2>1$, then $g$ is strongly primitive;
\item\label{(2)} if $l\ge 3$, the $p_i$ are pairwise  co-prime, and $P$ is neither a{n} $m$-partition nor a special $m$-partition, then any permutation in $S_n$ of cycle-type $P$ is strongly primitive.
\end{enumerate}
\end{thrm}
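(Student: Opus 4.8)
The plan is to obtain primitivity from the combinatorial criterion of Theorem~\ref{ipartition}, in the sharp form of Lemma~\ref{ipartest}, and then to upgrade to strong primitivity by extracting single cycles from suitable powers of $g$ and invoking the classification of primitive groups that contain a cycle.

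First I would argue primitivity by contradiction. If a permutation $g$ of type $P$ is imprimitive, then by Lemma~\ref{ipartest} there is a clustering $P_1,\dots,P_r$ of $P$ whose block sums $\ell_1,\dots,\ell_r$ have $d=\gcd(\ell_1,\dots,\ell_r)$ with $1<d<n$, and in which every part of $P_i$ is divisible by $k_i=\ell_i/d$. The leverage is pairwise coprimality: in any cluster with $k_i\ge 2$ two distinct parts would share the factor $k_i>1$, so such a cluster is a single part, which is then a multiple of $d$; and because $d>1$, at most one part of $P$ is a multiple of $d$. For part~(1) this is already decisive, since the only clusterings of a two-part partition are the single cluster (giving $d=n$) and the two singletons (giving $d=\gcd(p_1,p_2)=1$), and neither meets $1<d<n$; hence $g$ is primitive. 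For part~(2) the observation above forces the clustering to consist either of clusters all summing to $d$, so that $P$ is a $d$-partition, or of one singleton $\{p^{\ast}\}$ with $d\mid p^{\ast}$ and $p^{\ast}\ge 2d$ together with clusters summing to $d$; in the latter case every remaining part is smaller than $d$, so $p^{\ast}$ is the largest part and $P$ is a special $d$-partition. Since $1<d<n$ and $d\mid n$ throughout, this contradicts the hypothesis, and $g$ is primitive.

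To pass to strong primitivity I would use a power trick. Put $N=\prod_i p_i$, which is the least common multiple of the $p_i$ by pairwise coprimality; then $g^{N/p_i}$ acts as the identity on each cycle of length $p_j$ with $j\ne i$ and as a single $p_i$-cycle on the remaining cycle. Hence any transitive group $G$ containing $g$ contains, for every $i$ with $p_i>1$, a single $p_i$-cycle fixing the other $n-p_i$ points; and since $g$ is already known to be primitive, $G$ is primitive. Thus $G$ is a primitive group containing a cycle, and I would finish by invoking the classification of such groups: Jordan's theorem in the prime-cycle case (a transposition forcing $S_n$ and any other prime cycle with at least three fixed points forcing $A_n$) and Jones's theorem~\cite{jones} in general. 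The conditions $p_1,p_2>1$ in part~(1), and the failure of both the $m$-partition and special $m$-partition properties in part~(2), are precisely what supply the fixed points needed here and exclude the single-cycle-with-one-fixed-point elements living inside affine groups.

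The main obstacle is this last step, namely ruling out the exceptional primitive groups that can contain a cycle: the prime-degree groups inside $\mathrm{A\Gamma L}_1(p)$, the projective groups with $\mathrm{PSL}_d(q)\le G\le\mathrm{P\Gamma L}_d(q)$ carrying Singer-type cycles, and the Mathieu groups. I expect to eliminate these uniformly by exploiting that $G$ must contain cycles of at least two (part~(1)) or three (part~(2)) distinct, pairwise coprime lengths, each with a prescribed number of fixed points, so that $g$ has order $\prod_i p_i$; comparing this order and these cycle lengths with the known element orders and cycle shapes of each exceptional family rules them all out, leaving only $A_n$ and $S_n$. Carrying out this comparison without a tedious family-by-family analysis is the delicate part of the write-up.
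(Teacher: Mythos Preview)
Your primitivity argument via Lemma~\ref{ipartest} is essentially the paper's own, just phrased through the gcd $d$ of the cluster sums rather than directly through the block size; the dichotomy ``all clusters sum to $d$'' versus ``one singleton multiple of $d$ plus clusters summing to $d$'' is exactly how the paper recovers the $m$-partition and special $m$-partition cases.

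Where you diverge is in the strong-primitivity step, and here you are working much harder than necessary. The exceptional families in Jones's classification~\cite{jones} occur only for cycles of length $n$, $n-1$, or $n-2$; a primitive group containing a single cycle of length $\ell$ with $2\le \ell\le n-3$ already contains $A_n$, with nothing further to check. So the task is not to rule out the affine, projective and Mathieu families, but simply to exhibit one such short cycle among the $g^{N/p_i}$. Under the hypotheses this is immediate: in part~(1) with $p_1,p_2>1$ coprime and distinct, either the smaller part equals $2$ (a transposition, forcing $S_n$) or both parts are at least $3$, whence the smaller is at most $n-4\le n-3$; in part~(2) with $l\ge 3$ distinct parts, the largest part is at least $3$ while the remaining $l-1\ge 2$ distinct positive parts sum to at least $1+2=3$, so the largest part lies between $3$ and $n-3$. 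This is precisely what the paper does: it extracts one short cycle and invokes~\cite{jones} once. Your proposed comparison of element orders across the exceptional families would also succeed, but it is entirely avoidable; the ``delicate part of the write-up'' you anticipate does not exist.
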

\begin{proof}
It is clear that a $2$-part partition has non-empty i-type if and only if the
two parts are not co-prime, giving half of (\ref{(1)}). Let $G$ be a primitive group containing a permutation of type $P=(p_1,p_2)$, with $p_1,p_2>1$. Since they are co-primes, it follows that $G$ contains a $p_1$-cycle and a $p_2$-cycle. If one of them is a $2$-cycle, then $G$ is the symmetric group \cite{ac}. Otherwise, $G$ contains a cycle of length smaller than $n-2$, which by \cite{jones} implies that $G$ contains the alternating group. The proof of (\ref{(1)}) is complete.

Regarding (\ref{(2)}), observe that as above a primitive group containing a permutation with such
a cycle-type must contain a cycle of length at most $n-2$,  thus containing the alternating group. It remains to prove that a transitive group  containing a permutation with such a cycle-type is primitive.  Suppose that a partition with its parts pairwise co-prime is an i-partition
of type $(m,k)$. Let $(k_1,k_2,\ldots)$ be the corresponding partition of $k$.
Then there are parts of size divisible by $k_i$ summing to $mk_i$ for each $i$.
The pairwise co-prime condition shows that, for each $i$, either $k_i=1$ (so
there are some parts summing to $m$), or there is a single part of size
$mk_i$. The second alternative can hold at most once (else there are two
parts with a common factor $m$); if it holds once, then we have a special
$m$-partition, otherwise an $m$-partition. This gives (\ref{(2)}) and also
shows how the two types of partitions emerge naturally.
\end{proof}

{\color{black} The next result extends to full classifications the partial results in \cite{lopes}. To do this we only need to use \cite{jones}.}
\begin{thrm}(\cite{jones,lopes})
Let $\Omega$ be a set of size $n$ and let $P\in \{(1,n-1),(n)\}$ be a partition of $n$, in which the parts are pairwise distinct.  
Now we have the following:
 \begin{enumerate}
\item\label{(3)} if $P=(1,n-1)$, then any permutation of this cycle-type is primitive, and a proper primitive group $G$ contains a permutation of such type  if and only if one of the following holds:
\begin{enumerate}
\item $\AGL(d,q)\le G\le \AGAMMAL(d,q)$, with $n=q^d$ and $d\ge 1$, for some prime power $q$;
\item $G={\color{black} \PSL(2,p)}$ or $\PGL(2,p)$ with $n=p+1$ for some prime $p\ge 5$;
\item $G=\Mat_{11},\Mat_{12}$ or $\Mat_{24}$ with $n=12,12$ or $24$ respectively;
\end{enumerate}
\item if $P=(n)$, then
 \begin{enumerate}
\item if $n$ is prime, then every permutation in $S_n$ is primitive, and the proper primitive groups containing a permutation of cycle-type $P$ are the following:
\begin{enumerate}
\item $C_p\le G\le \AGL(1,p)$,  with $n=p$ prime;
\item $\PGL(d,q)\le G \le \PGAMMAL (d,q)$ with $n=(q^d-1)/(q-1)$ and $d\ge 2$ for some prime power $q$, and $n$ prime;
\item $G={\color{black}\PSL(2,11)}, \Mat_{11}$ or $\Mat_{23}$ with $n=11,11$ or $23$ respectively;
\end{enumerate}
\item if $n$ is not prime, then every $n$-cycle is an imprimitive permutation as the cyclic group it generates is imprimitive.
\end{enumerate}
\end{enumerate}
\end{thrm}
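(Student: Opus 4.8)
The plan is to split each case into two independent assertions: first the \emph{permutation-level} claim that an element of the given cycle-type is primitive (or, in the composite case, imprimitive), which is settled entirely by Theorem~\ref{ipartition}; and second the \emph{group-level} classification of the proper primitive groups containing such an element, which is precisely the content of Jones's classification \cite{jones} of primitive groups containing a cycle. Since the classification is quoted, the genuine work is confined to the permutation-level statements and to matching Jones's list against the cases stated here.

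I would first dispose of the permutation-level statements. For $P=(1,n-1)$ the two parts are coprime, as $\gcd(1,n-1)=1$, so Theorem~\ref{lps}(\ref{(1)}) gives at once that every permutation of this type is primitive; equivalently, a part of size $1$ can occur in an ic-partition of type $(k,m)$ only when $m=1$, so by Theorem~\ref{ipartition} the cycle partition $(1,n-1)$ is never an i-partition with $m>1$. For $P=(n)$ with $n$ composite, write $n=km$ with $k,m>1$; the single part $n=k\times m$ is itself an ic-partition of type $(k,m)$, hence an i-partition, and Theorem~\ref{ipartition} shows every $n$-cycle is imprimitive, which is the ``$n$ not prime'' clause. (Concretely, the cyclic group generated by an $n$-cycle preserves the block system formed by the cosets of its unique subgroup of order $m$.) When $n$ is prime there is no nontrivial block system on $n$ points at all, so every permutation of degree $n$, in particular every $n$-cycle, is primitive, giving the opening clause of the ``$n$ prime'' case.

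For the classifications I would invoke Jones~\cite{jones}, whose CFSG-dependent theorem lists all primitive permutation groups of degree $n$ containing a cycle with a prescribed number of fixed points. The case $P=(1,n-1)$ corresponds to a cycle fixing exactly one point, i.e.\ an $(n-1)$-cycle, and I would read off from \cite{jones} exactly the families $\AGL(d,q)\le G\le\AGAMMAL(d,q)$, $\PSL(2,p)\le G\le\PGL(2,p)$, and the three Mathieu groups $\Mat_{11},\Mat_{12},\Mat_{24}$, after discarding the two generic entries $S_n$ and $A_n$ excluded by the hypothesis that $G$ be proper. The case $P=(n)$ with $n$ prime corresponds to a full $n$-cycle, and the same reference yields the three listed families, the projective family $\PGL(d,q)\le G\le\PGAMMAL(d,q)$ surviving only for those $(q^d-1)/(q-1)$ that happen to be prime; thus the restriction to prime $n$ automatically prunes Jones's general list and no extra case-analysis is required.

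The only real depth here lies in \cite{jones} itself, so the remaining labour is bookkeeping rather than difficulty: confirming the degrees in each entry, and verifying the ``if'' direction that each listed family genuinely realises the stated cycle-type. This last check is routine---$\AGL(d,q)$ contains an $(q^d-1)$-cycle fixing the origin through a Singer cycle in the point stabiliser, $\PSL(2,p)$ contains a unipotent $p$-cycle fixing one point of the projective line (so $n=p+1$), and the Mathieu groups act $3$-transitively on $12,12$ and $24$ points respectively. I expect no genuine obstacle beyond ensuring the biconditional is complete in both directions and that the prime-degree specialisation in the second case has been carried out correctly.
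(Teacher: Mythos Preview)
Your proposal is correct and matches the paper's approach exactly: the paper gives no proof at all for this theorem beyond the lead-in sentence ``To do this we only need to use \cite{jones}'', so the intended argument is precisely what you describe---settle the permutation-level primitivity claims via Theorem~\ref{ipartition} (or its corollary Theorem~\ref{lps}(\ref{(1)})) and then read off the group classifications from Jones. One small slip: in your parenthetical alternative argument, a part of size $1$ in an ic-partition of type $(k,m)$ forces $k=1$, not $m=1$; but your primary appeal to Theorem~\ref{lps}(\ref{(1)}) via $\gcd(1,n-1)=1$ is already a complete and cleaner justification, so this does not affect the argument.
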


{\color{black} In the end of his paper,  Lopes \cite{lopes} asks the following questions whose answers easily follow from our results:}

\begin{enumerate}
\item Is it always possible to identify the primitive permutations of a primitive group?
\item\label{al2} Are there primitive permutations of cycle-type a relatively prime [special] $m$-partition?
\item Are there primitive permutations whose cycle-types have parts with multiplicity greater than one, but the different parts are still mutually prime?
\end{enumerate}

Regarding the last question, the partitions $P=(1,1,n-2)$, for $n>2$, satisfy the property and about them we have the following result.

\begin{thrm}
A permutation of type $(1,1,n-2)$ is primitive if and only if $n$ is odd. In addition a degree $n$ primitive group  $G$ contains a permutation of type $(1,1,n-2)$ if and only if  $\PGL(2,q)\le G \le \PGAMMAL (2,q)$, with $n=q+1$ for some prime power $q$.
\end{thrm}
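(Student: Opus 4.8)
The plan is to treat the two assertions separately, deriving the parity statement from the combinatorial machinery of Section~\ref{prelim} and the classification statement from the known classification of primitive groups containing a cycle.

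For the first assertion I would apply Lemma~\ref{ipartest} to the partition $P=(1,1,n-2)$ and run through all clusterings of its three parts. Up to the symmetry between the two parts equal to $1$ there are only four clusterings, and three of them (all singletons, a $1$ paired with $n-2$, and all three parts together) give $\gcd$ equal to $1$, $1$, or $n$, none of which satisfies $1<g<n$. The only admissible clustering groups the two parts equal to $1$ together and leaves $n-2$ alone, giving $\ell=(2,n-2)$ and $g=\gcd(2,n-2)$. This equals $2$ exactly when $n$ is even and $1$ when $n$ is odd; and when it equals $2$ the two clusters are genuine ic-partitions, of types $(1,2)$ and $((n-2)/2,2)$, so $P$ is an i-partition of type $(n/2,2)$. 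By Theorem~\ref{ipartition} the permutation is then imprimitive, while for $n$ odd no admissible clustering exists and it is primitive. Hence $(1,1,n-2)$ is primitive if and only if $n$ is odd.

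For the second assertion I would first observe that a permutation of type $(1,1,n-2)$ is exactly an $(n-2)$-cycle fixing two points, so a primitive $G$ contains such an element if and only if it contains a cycle of length $n-2$. The existence direction is then easy: for $n=q+1$ take $\zeta$ a generator of $\mathbb{F}_q^\times$ and the image in $\PGL(2,q)$ of $\mathrm{diag}(\zeta,1)$ acting on the projective line $\mathrm{PG}(1,q)$; it fixes $0$ and $\infty$ and permutes the remaining $q-1=n-2$ points in a single cycle, so it has type $(1,1,n-2)$ and lies in $\PGL(2,q)\le G$. Since $\PGL(2,q)$ is $2$-transitive, every $G$ with $\PGL(2,q)\le G\le\PGAMMAL(2,q)$ is primitive of degree $q+1$.

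The substantive direction is the converse, and the plan is to invoke Jones's classification \cite{jones} of the primitive groups of degree $n$ containing a cycle, applied to the case of a cycle fixing exactly two points. That list shows such a group either contains $A_n$ or satisfies $\PGL(2,q)\le G\le\PGAMMAL(2,q)$ with $n=q+1$. The groups containing $A_n$ must then be discarded as they are not proper primitive groups in the sense of the preceding theorem (note that $S_n$ contains an $(n-2)$-cycle for every $n$, and $A_n$ does so exactly when $n$ is odd, so the word ``proper'' is genuinely needed here: for instance $S_6$ is primitive of degree $6$ and contains a permutation of type $(1,1,4)$ yet is not of the stated form). This leaves precisely the asserted family. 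The main obstacle is exactly this step: all the depth is carried by the cycle-classification theorem, so the real work is to read off its two-fixed-point case correctly, to confirm that no further sporadic exceptions arise for this fixed-point count (in contrast to the one- and zero-fixed-point cases, which do yield Mathieu groups), and to handle the boundary small degrees, such as $n=4$ where $\PGL(2,3)\cong S_4$ and the family degenerates into the symmetric group.
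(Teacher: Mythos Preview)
Your proposal is correct and follows essentially the same route as the paper: the parity claim via the i-partition criterion (the paper cites Theorem~\ref{ipartition}, you work through the equivalent Lemma~\ref{ipartest} explicitly), and the classification via Jones's theorem on primitive groups containing a cycle. Your write-up in fact adds value over the paper's two-line proof, since you exhibit the relevant $(n-2)$-cycle in $\PGL(2,q)$ for the forward direction and correctly flag that the statement tacitly means \emph{proper} primitive groups, as in the preceding theorem---otherwise $S_n$ (and $A_n$ for odd $n$) would be counterexamples.
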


\begin{proof} The first part of the theorem is a simple consequence of Theorem~\ref{ipartition}.  The second part of the result  follows from \cite{jones}.
\end{proof}

Theorem \ref{ipartition} and Subsection \ref{algorithm} answer the first question above.

Finally, to solve question (\ref{al2}), suppose we have a $m$-partition of $n$, and suppose we can organize the parts in sets that  sum up to $k$ each. Then $k$ must divide $n$ (as the total sum is $n$) and a permutation of this cycle-type  is found in the $k$-fold direct power of $S_{n/k}$. But this group lies in the imprimitive wreath product $S_{n/k}\wr S_k$. So the permutation cannot be primitive.

Alternatively, it can be shown that both $m$-partitions and special $m$-partitions are i-partitions, and therefore imprimitive. Let $p$ be an $m$-partition with cycle-type $(a_1, a_2, \cdots, a_n)$. Now partition this partition into parts where the sum of the elements is $m$. All those parts are ic-partitions of type $(1,m)$, and therefore this new partition is a clustering. For special $m$-partitions, the argument is similar, except that we have a cluster where the larger part is alone.

As an example, consider the partitions given after the definition of $m$-partition. We have the partitions $P_1=(2,3,5)$ and $P_2=(2,3,10)$. For $P_1$, we have the clustering $\{(2,3), (5)\}$, where the clusters are both $(1,5)$ ic-partitions. For $P_2$, we have the clustering $\{(2,3), (10)\}$, where the clusters are, respectively, a $(1,5)$ ic-partition and a $(2,5)$ ic-partition.

\section{Problems}

\begin{prob}
Does Theorem \ref{AGL imprimitive converse} hold for all integers $q\ge 2$?
\end{prob}

\begin{prob}
Are there primitive groups $G,H\le S_n$ with the same spectrum, but such that $G$ belongs to $\mathbf{NAP} k$, but $H$ does not?
\end{prob}

\begin{prob}
Is there a natural transversal for the spectrum equivalence classes for the degree $n$ primitive groups?
\end{prob}

\begin{prob}
Is the class  $\mathbf{NEP} k$ [$\mathbf{NAP} k$] non-empty, for all natural $k$?
\end{prob}

We observed that $A_6$ acting on pairs is a primitive group containing only imprimitive permutations; and also observed that since $A_6$ is $2$-generated, there exists a primitive group containing no primitive permutation, but containing primitive pairs of {\color{black} imprimitive permutations}. Therefore, the following theorem might be of some use to attack the previous problem.

\begin{thrm}(\cite{Lu2})	Let
 $G\le S_n$ be a primitive permutation group. Then
the smallest number of elements needed to generate $G$ is at most
 \[
\frac{C\log n}{\sqrt{\log \log n}}, 	
 \]where $C$ is a universal constant.
\end{thrm}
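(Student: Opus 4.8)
The plan is to prove the stated upper bound by combining the O'Nan--Scott classification of primitive groups with the known bounds on the number of generators of transitive permutation groups. Throughout write $d(X)$ for the minimal number of generators of a group $X$. The starting observation is that in every O'Nan--Scott type the socle of $G$ is a direct product $N=T^k$ of $k$ copies of a simple group $T$, and the only source of many generators is the transitive action of $G$ on the $k$ factors (or, in the affine case, on a system of blocks of the natural module). First I would record, using \cite[Lemma 2.7.A]{dixon} together with the description of the types, that in each case $n$ grows at least exponentially in $k$: for the product action $n=m^k$ with $m\ge 5$, for diagonal type $n=|T|^{k-1}$ with $|T|\ge 60$, and for the affine type with imprimitive natural module $n=p^{d}$ with $d\ge k$ and $p\ge 2$. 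Hence $k\le c_1\log n$ for an absolute constant $c_1$.

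The second step is to reduce $d(G)$ to $d(P)$, where $P\le S_k$ is the permutation group that $G$ induces on the $k$ simple factors of the socle. The point is that a group with a nonabelian minimal normal subgroup $N=T^k$ needs only boundedly many generators beyond what is required to generate the quotient acting on the factors: invoking the generation theory of groups with nonabelian socle (Gasch\"utz-type results and the structure of crown-based powers, using that $T$ is $2$-generated) gives $d(G)\le d(P)+c_0$ for an absolute constant $c_0$. In the almost simple case ($k=1$) this already bounds $d(G)$ by an absolute constant. In the affine case the same reduction applies after passing through Aschbacher's classification of irreducible subgroups $H\le\GL(d,p)$: the only configuration producing unboundedly many generators is the imprimitive one, $H\le\GL(d/t,p)\wr S_t$ with transitive image $P\le S_t$ and $t\le d$, and here $d(G)=d(V{:}H)\le d(H)+1\le d(P)+c_0'$, since the irreducible module $V$ is generated as an $H$-module by a single vector.

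The third step is to bound $d(P)$. Since $P$ is a transitive subgroup of $S_k$, the known bound on the number of generators of transitive permutation groups gives $d(P)\le c_2\,k/\sqrt{\log k}$ for an absolute constant $c_2$. Substituting $k\le c_1\log n$ and using $\log(c_1\log n)\sim\log\log n$ yields
\[
d(G)\ \le\ d(P)+c_0\ \le\ c_2\,\frac{c_1\log n}{\sqrt{\log(c_1\log n)}}+c_0\ \le\ \frac{C\log n}{\sqrt{\log\log n}}
\]
for a suitable universal constant $C$ and all large $n$, the finitely many small degrees being absorbed into $C$. This completes the argument.

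The genuinely hard ingredients are the second and third steps. The reduction $d(G)\le d(P)+c_0$ rests on the nontrivial generation theory of finite groups with a nonabelian minimal normal subgroup, and in the affine case must be threaded through the Aschbacher subgroup structure to pin down exactly where large generator numbers can arise; and the bound $d(P)=O(k/\sqrt{\log k})$ for transitive groups is itself a deep result whose $\sqrt{\log k}$ term is precisely what produces the $\sqrt{\log\log n}$ in the denominator. Everything else --- the O'Nan--Scott bookkeeping and the estimate $k=O(\log n)$ --- is routine.
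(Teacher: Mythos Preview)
The paper does not prove this theorem at all: it is quoted verbatim from \cite{Lu2} (Lucchini, Menegazzo and Morigi) and used only as background for the problem list that follows. So there is no ``paper's own proof'' to compare against; your proposal is effectively an outline of the argument in \cite{Lu2} itself.

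As such an outline, your strategy is the right one and matches the structure of \cite{Lu2}: O'Nan--Scott reduction, the estimate $k=O(\log n)$ for the number of socle factors, and then the transitive bound $d(P)=O(k/\sqrt{\log k})$ applied to the action on the factors. Two places are more delicate than you indicate. First, in the non-affine case the quotient $G/\mathrm{soc}(G)$ is not $P$ but a subgroup of $\mathrm{Out}(T)\wr P$, so the reduction $d(G)\le d(P)+c_0$ requires an extra step disposing of the $\mathrm{Out}(T)^k$ layer (this uses that $\mathrm{Out}(T)$ has bounded derived length, via the Schreier conjecture). Second, the affine case is not merely ``the imprimitive configuration'': bounding $d(H)$ for an arbitrary irreducible $H\le\GL(d,p)$ is one of the two main theorems of \cite{Lu2} and occupies most of that paper, requiring an inductive descent through all Aschbacher classes, not just the imprimitive one. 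Your sketch acknowledges these as the hard ingredients, which is fair, but the affine/linear part is substantially more work than a single sentence suggests.
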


Let $G\le S_n$ and denote by $d(G)$ the smallest number of elements of $G$ needed to generate this group. Let $m(n)$ be the maximum of the set $\{d(G)\mid G\le S_n \mbox{ is primitive}\}$. To handle the previous question we certainly will need a very good lower bound for $m(n)$.  So we propose the {following} problem.

\begin{prob}
Let $n$ be a natural number. Find good lower bounds for $m(n)$.
\end{prob}

 By Corollary 2.4 of \cite{orbits}, every $2$-homogeneous group $G$ has $d(G)=2$ and hence to compute $m(n)$ we only have to consider the other primitive groups. We observe that the bound $\frac{cn}{\sqrt{\log n}}$ is the best possible apart from constants; examples are in \cite{konew}. The best possible constants have been found by G. Tracey in \cite{tracey}. This observation does not solve the problem (since when $n$ is prime then $m(n)=2$), but it shows that the upper bound cannot be improved in general. (See also \cite{tracey2}.)

\begin{prob}  	
Is it true that $\mathbf{NP} 1$ intersects every type of primitive groups in GAP's version of the O'Nan-Scott theorem?

Answer similar questions for $\mathbf{NAP} k$ and in $\mathbf{NEP} k$.
\end{prob}

\begin{prob}
Classify the primitive groups in  $\mathbf{NAP} k$ and in $\mathbf{NEP} k$.
\end{prob}

\begin{prob}
Can the examples included in the table{\color{black}s} of subsection \ref{infinite} {\color{black} and subsection \ref{same spectrum}} be extended to infinite families?
\end{prob}

We saw above that the class of $\mathbf{NP} 1$ is to a large extent contained in the class of non-synchronizing groups. Therefore the next question looks natural.
\begin{prob}  Is there any natural $m$ such that for every $k>m$ every group in 	
$\mathbf{NAP} k$ [$\mathbf{NEP} k$] is non-synchronizing?
\end{prob}

{\color{black} The classification of synchronizing groups is still to be done. Here we propose an (hopefully) easier problem.
\begin{prob}	
Classify the synchronzing groups in which every element is an imprimitive permutation.
\end{prob}
}

\section{Acknowledgments}

The first author acknowledges that this work was developed within FCT projects CAUL (PEst-OE/MAT/UI0143/2014)  and CEMAT-CI\^{E}NCIAS (UID/Multi/04621/2013).

The second author acknowledges that this work was developed within Funda\c{c}\~{a}o Caluste Gulbenkian "Novos Talentos em Matem\'{a}tica 2015" grant.

The work of the fifth author was supported by
Simons Foundation Collaboration Grant~244502.

The last author acknowledges support from FCT (Funda\c c\~ao para a Ci\^encia e a Tecnologia), Portugal, through project FCT EXCL/MAT-GEO/0222/2012, ``Geometry and Mathematical Physics''.

\providecommand{\bysame}{\leavevmode\hbox to3em{\hrulefill}\thinspace}
\providecommand{\MR}{\relax\ifhmode\unskip\space\fi MR }
\providecommand{\MRhref}[2]{%
  \href{http://www.ams.org/mathscinet-getitem?mr=#1}{#2}
}
\providecommand{\href}[2]{#2}

\end{document}